\newcommand{\fancyot}{\mathbin{\text{\footnotesize\textcircled{\tiny \sf T}}}}
\theoremstyle{definition}
\newtheorem{theorem}{Theorem}[section]
\newtheorem{proposition}{Proposition}[section]
\newtheorem{lemma}{Lemma}[section]
\newtheorem{corollary}{Corollary}[section]
\newtheorem{definition}{Definition}[section]
\newtheorem{remark}{Remark}[section]
\numberwithin{equation}{section}
\begin{document}
\title{Inductive limits of compact quantum groups and their unitary representations}
\author[R. Sato]{Ryosuke SATO}
\address{Graduate School of Mathematics, Nagoya University, Chikusaku, Nagoya 464-8602, Japan}
\email{d19001r@math.nagoya-u.ac.jp}
\maketitle

\begin{abstract}
We will introduce the notion of inductive limits of compact quantum groups as $W^*$-bialgebras equipped with some additional structures. We also formulate their unitary representation theories. Those give a more explicit representation-theoretic meaning to our previous study of quantized characters associated with a given inductive system of compact quantum groups. As a byproduct, we will give an explicit representation-theoretic interpretation to some transformations that play an important role in the analysis of $q$-central probability measures on the paths in the Gelfand--Tsetlin graph. 
\end{abstract}

\allowdisplaybreaks{
\section{Introduction}
In our previous paper \cite{Sato1}, we introduced the notion of quantized characters associated with a given inductive system, say $\mathbb{G} = (G_N)_{N=0}^\infty$, of compact quantum groups. This work was based on the so-called Stratila--Voiculescu AF-algebra $\mathfrak{A}(\mathbb{G})$, which we regard as the group $C^*$-algebra associated with the conjectural ``inductive limit quantum group $G_\infty$" arising from $\mathbb{G}$. In the same paper \cite{Sato1}, we also gave a serious investigation of quantized characters when the inductive system $\mathbb{G}$ consists of the quantum unitary groups $U_q(N)$ with $0 < q < 1$. One of the consequences there was to give an explicit interpretation to Gorin's asymptotic analysis \cite{Gorin12} of the so-called $q$-Gelfand--Tsetlin graph in terms of $q$-deformed quantum groups. However, we have not yet constructed any explicit ``inductive limit quantum group $G_\infty$" from $\mathbb{G}$, which should fit into our theory of quantized characters as well as Gorin's analysis of $q$-Gelfand--Tsetlin graph (when $G_N = U_q(N)$). Here we need a new framework of operator algebraic quantum groups because the infinite-dimensional unitary group $U(\infty)=\varinjlim_NU(N)$ is not locally compact.

In this paper, we will introduce a natural inductive limit of the given $\mathbb{G}$ as a $W^*$-bialgebra. More precisely, we will show that Takeda's $W^*$-inductive limit (see \cite{Takeda}), denoted by $W^*(G_\infty)$ here, of compact quantum group $W^*$-algebras $W^*(G_N)$ (in the sense of Yamagami \cite{Yamagami}) admits a natural quantum group structure similar to Woronowicz algebras introduced by Masuda--Nakagami \cite{MasudaNakagami}, that is, $W^*(G_\infty)$ has a comultiplication $\hat\delta_\infty$, a unitary antipode $\hat R_\infty$ and a deformation automorphism group $\{\hat\tau^\infty_t\}_{t\in\mathbb{R}}$ on $W^*(G_\infty)$. Takeda's construction of $W^*$-inductive limits admits that $\mathfrak{A}(\mathbb{G})$ sits in $W^*(G_\infty)$ as a $\sigma$-weakly dense $*$-subalgebra. More importantly, all the normal KMS states on $W^*(G_\infty)$ with respect to $\{\hat{\tau}^\infty_t\}_{t\in\mathbb{R}}$ are naturally identified (as restriction/extension) with our previous quantized characters defined on $\mathfrak{A}(\mathbb{G})$. Thus, we can reformulate our previous theory of quantized characters associated with $\mathbb{G}$ in terms of this $W^*$-bialgebra $G_\infty=(W^*(G_\infty),\hat\delta_\infty,\hat{R}_\infty, \hat{\tau}^\infty_t)$ except for the ``inductive limit topology" on the convex set of quantized characters.  In fact, $W^*(G_\infty)$ is too large and needs a smaller subalgebra like $\mathfrak{A}(\mathbb{G})$ to introduce a natural topology on the quantized characters; see Remark \ref{rem:waidona}. 

The introduction of this explicit $W^*$-bialgebra $G_\infty$ allows us to give a natural unitary representation theory of $G_\infty$ rather than $\mathbb{G}$. For instance, we can obtain a new quantized character $\chi$ from two given ones $\chi_1, \chi_2$ through the tensor product of the unitary representations arising from those $\chi_1, \chi_2$. We will investigate this $\chi$ when $G_N = U_q(N)$ and describe it in terms of the so-called $q$-Schur generating functions associated with $\chi_1, \chi_2$. Those $q$-Schur generating functions are nothing less than the ``restrictions" of $\chi_1, \chi_2$  to the ``maximal tori". In this context, we will pay our attention to the quantum analog of the determinant representation of $U(\infty)$, which we call the quantum determinant representation. As a consequence, we will see that the tensor product of the quantum determinant representation and the unitary representation arising from any extreme quantized character is also associated with an extreme quantized character. Moreover, we will describe such a tensor product representation in terms of concrete transformations on the set of extreme quantized characters. Here, we would like to emphasize that those transformations often appear in the analysis of $q$-central probability measures on the paths in the Gelfand--Tsetlin graph (see \cite{Gorin12}, \cite{Cuenca}), and hence the present work gives, as a byproduct,  an explicit representation-theoretic interpretation to some important transformations in the analysis of $q$-central probability measures due to Gorin.

In Section 2, we review necessary facts on compact quantum groups. Section 3 is a main part in this paper, where we will introduce the notion of inductive limits of compact quantum groups. In Section 4, we investigate tensor products of unitary representations of the infinite-dimensional quantum unitary group $U_q(\infty)$, which is the inductive limit of quantum unitary groups $U_q(N)$. As an appendix, we give a possible formulation of spherical representations and spherical functions in our setting of quantum group $W^*$-algebras following Olshanski's idea, e.g., \cite{BorodinOlshanski}, \cite{Olshanski90}, \cite{Olshanski03}.

\section{Preliminaries}\label{sec:cqg}
We review necessary facts on compact quantum groups and compact quantum group $W^*$-algebras. 

Let $G=(A(G),\delta_G)$ be a compact quantum group, that is, $A(G)$ is a unital $C^*$-algebra and $\delta_G\colon A(G)\to A(G)\otimes A(G)$ is a unital $*$-homomorphism satisfying
\begin{itemize}
\item $(\delta_G\otimes\mathrm{id})\circ\delta_G=(\mathrm{id}\otimes\delta_G)\circ\delta_G$ as $*$-homomorphisms from $A(G)$ to $A(G)\otimes A(G)\otimes A(G)$,
\item $(A(G)\otimes 1)\delta_G(A(G)),\,(1\otimes A(G))\delta_G(A(G))\subset A(G)\otimes A(G)$ are dense,
\end{itemize}
where the symbol $\otimes$ denotes the operation of minimal tensor products of $C^*$-algebras. Let $h_G$ be the Haar state of $G$, which is a quantum analog of integration by a Haar probability measure. Moreover, $\{f^G_z\}_{z\in\mathbb{C}}$ denotes the so-called Woronowicz characters of $G$. See \cite{NeshveyevTuset} for more details.

In this paper, we always assume that $\widehat{G}$ is countable, where $\widehat{G}$ is the set of all unitary equivalence classes of irreducible unitary representations of $G$. For each $\alpha\in\widehat{G}$ we fix a representative $U_\alpha\in B(\mathcal{H}_{U_\alpha})\otimes A(G)$, where the representation space $\mathcal{H}_{U_\alpha}$ must be finite dimensional. Then the matrix $F_{U_\alpha}:=(\mathrm{id}\otimes f^G_1)(U_\alpha)\in B(\mathcal{H}_{U_\alpha})$ is positive and invertible with $\mathrm{Tr}(F_{U_\alpha})=\mathrm{Tr}(F_{U_\alpha}^{-1})$, where this trace is called the \emph{quantum dimension} of $U_\alpha$ (and of $\alpha$), denoted by $\dim_q(\alpha)$. 

We denote by $\mathcal{A}(G)$ the linear subspace of $A(G)$ generated by all matrix coefficients of finite dimensional representations of $G$. Then $\mathcal{A}(G)$ becomes a $*$-subalgebra of $A(G)$. In this paper, we always assume that $A(G)$ is the universal $C^*$-algebra generated by $\mathcal{A}(G)$. The linear dual $\mathcal{A}(G)^*$ also becomes a $*$-algebra and $\mathcal{A}(G)^*\cong\prod_{\alpha\in\widehat{G}}B(\mathcal{H}_{U_\alpha})$ naturally as $*$-algebras. Then we define the three $*$-subalgebras $\mathbb{C}[G]$, $C^*(G)$ and $W^*(G)$ of $\mathcal{A}(G)^*$ that are $*$-isomorphic to 
\[\mathrm{alg}\mathchar`-\bigoplus_{\alpha\in\widehat{G}}B(\mathcal{H}_{U_\alpha}):=\left\{(x_\alpha)_{\alpha\in\widehat{G}}\in\prod_{\alpha\in\widehat{G}}B(\mathcal{H}_{U_\alpha})\,\middle|\,x_\alpha=0\text{ without finitely many }\alpha\in\widehat{G}\right\},\]
\[c_0\mathchar`-\bigoplus_{\alpha\in\widehat{G}}B(\mathcal{H}_{U_\alpha}):=\left\{(x_\alpha)_{\alpha\in\widehat{G}}\in\prod_{\alpha\in\widehat{G}}B(\mathcal{H}_{U_\alpha})\,\middle|\,\lim_{\alpha\in\widehat{G}}\|x_\alpha\|=0\right\},\]
\[\ell^\infty\mathchar`-\bigoplus_{\alpha\in\widehat{G}}B(\mathcal{H}_{U_\alpha}):=\left\{(x_\alpha)_{\alpha\in\widehat{G}}\in\prod_{\alpha\in\widehat{G}}B(\mathcal{H}_{U_\alpha})\,\middle|\,\sup_{\alpha\in\widehat{G}}\|x_\alpha\|<\infty\right\}\]
by the above $*$-isomorphism, respectively. Then $C^*(G)$ becomes a $C^*$-algebra called the \emph{group $C^*$-algebra} of $G$, and $W^*(G)$ becomes a von Neumann algebra called the \emph{group von Neumann algebra} of $G$. In what follows, we identify three $*$-algebras $\mathbb{C}[G]\subset C^*(G)\subset W^*(G)$ with 
\[\mathrm{alg}\mathchar`-\bigoplus_{\alpha\in\widehat{G}}B(\mathcal{H}_{U_\alpha})\subset c_0\mathchar`-\bigoplus_{\alpha\in\widehat{G}}B(\mathcal{H}_{U_\alpha})\subset \ell^\infty\mathchar`-\bigoplus_{\alpha\in\widehat{G}}B(\mathcal{H}_{U_\alpha}).\]

Let $\{\tau^G_t\}_{t\in\mathbb{R}}$ be the scaling group of $G$, which is a one-parameter automorphism group on $\mathcal{A}(G)$ (see \cite[Section 1.7]{NeshveyevTuset}). It is known that their dual maps $\{\hat\tau^G_t\}_{t\in\mathbb{R}}$ (defined by $\hat\tau^G_t(f):=f\circ\tau^G_t$ for $f\in\mathcal{A}(G)^*$) is given as $\{\prod_{\alpha\in\widehat{G}}\mathrm{Ad}F_{U_\alpha}^{\mathrm{i}t}\}_{t\in\mathbb{R}}$. Thus, $\{\hat\tau^G_t\}_{t\in\mathbb{R}}$ preserves $\mathbb{C}[G], C^*(G)$ and $W^*(G)$ respectively, and forms a one-parameter automorphism group. In what follows, we denote by the same symbol $\{\hat\tau^G_t\}_{t\in\mathbb{R}}$ the restrictions to $W^*(G)$ and $C^*(G)$.

Let $(\pi_G,L^2(G),\xi_G)$ be the GNS-triple associated with the Haar state $h_G$. Then, using the orthogonality relations of matrix coefficients of irreducible representations (see \cite[Theorem 1.4.3]{NeshveyevTuset}), we can show that $L^2(G)$ is isometrically isomorphic to $\ell^2\mathchar`-\bigoplus_{\alpha\in\widehat{G}}B(\mathcal{H}_{U_\alpha})$,
where each $B(\mathcal{H}_{U_\alpha})$ has the inner product given by $\langle X,Y\rangle:=\mathrm{Tr}_{\mathcal{H}_{U_\alpha}}(F_{U_\alpha} Y^*X)/\dim_q(\alpha)$. Thus, we can regard operators in $C^*(G)$ and $W^*(G)$, which act on $\ell^2\mathchar`-\bigoplus_{\alpha\in\widehat{G}}B(\mathcal{H}_{U_\alpha})$ by the left multiplication, as bounded operators on $L^2(G)$. Moreover, we always assume that $h_G$ is faithful. Thus, $A(G)$ is faithfully embedded into $B(L^2(G))$.

We define $U^G:=\bigoplus_{\alpha\in\widehat{G}}U_\alpha\in \bigoplus_{\alpha\in\widehat{G}}W^*(G)\otimes B(\mathcal{H}_\alpha)= W^*(G)\otimes B(L^2(G))$. By \cite[Theorem 3.1]{PodlesWoronowicz} and \cite[Section 2.2]{Tomatsu}, there exists a unique unital $*$-homomorphism $\hat\delta_G\colon W^*(G)\to W^*(G)\bar\otimes W^*(G)$ such that
\begin{equation}\label{eq:coproduct}
(\hat\delta_G\otimes\mathrm{id})\circ\hat\delta_G=(\mathrm{id}\otimes\hat\delta_G)\circ\hat\delta_G,\quad
(\hat\delta_G\otimes\mathrm{id})(U^G)=U^G_{23}U^G_{13},
\end{equation}
\begin{equation}\label{eq:comm_action}
\hat\delta_G\circ\hat\tau^G_t=(\hat\tau^G_t\otimes\hat\tau^G_t)\circ\hat\delta_G.
\end{equation}
We denote by $\hat\epsilon_G$ and $\hat{R}_G$ the counit and the unitary antipode of $W^*(G)$, respectively. See \cite[Section 1.6]{NeshveyevTuset} for definitions. The counit $\hat\epsilon_G\colon W^*(G)\to\mathbb{C}$ is a normal $*$-homomorphism and the unitary antipode $\hat R_G\colon W^*(G)\to W^*(G)$ is an involutive normal $*$-anti-automorphism. It is known that 
\begin{equation}\label{eq:nogi}
\hat\tau^G_t\circ\hat R_G=\hat R_G\circ\hat\tau^G_t,\quad \hat\delta_G\circ\hat R_G=(\hat R_G\otimes\hat R_G)\circ\hat\delta_G^\mathrm{op},
\end{equation}
where $\hat\delta^\mathrm{op}:=\sigma\circ\hat\delta$ and $\sigma\colon W^*(G)\bar\otimes W^*(G)\to W^*(G)\bar\otimes W^*(G)$ is the flip map. Remark that $\hat R_G\circ\hat\tau^G_{-\mathrm{i}/2}=\hat\tau^G_{-\mathrm{i}/2}\circ\hat R_G$ becomes an antipode, where $\hat\tau_{-\mathrm{i}/2}$ is the analytic continuation of $\{\hat\tau_t\}_{t\in\mathbb{R}}$. More precisely, for any $x\in\mathbb{C}[G]$ 
\[m\circ(\mathrm{id}\otimes(\hat R_G\circ\hat\tau^G_{-\mathrm{i}/2}))(\hat\delta_G(x))=\hat\epsilon_G(x)1=m\circ((\hat R_G\circ\hat\tau^G_{-\mathrm{i}/2})\otimes\mathrm{id})(\hat\delta_G(x)),\]
where $m\colon W^*(G)\odot W^*(G)\to W^*(G)$ is the multiplication. Then, following Yamagami \cite{Yamagami}, we call the $(W^*(G),\hat\delta_G,\hat\epsilon_G,\hat R_G,\{\hat\tau^G_t\}_{t\in\mathbb{R}})$ a \emph{compact quantum group $W^*$-algebra}. In what follows, we use the same symbol $G$ to denote a compact quantum group $W^*$-algebra, i.e., $G=(W^*(G),\hat\delta_G,\hat\epsilon_G,\hat R_G,\{\hat\tau^G_t\}_{t\in\mathbb{R}})$.

\begin{definition}
A normal $\hat\tau^G$-KMS state on $W^*(G)$ with the inverse temperature $-1$ is called a \emph{quantized character} of $G$. We denote by $\mathrm{Char}(G)$ the set of quantized characters of $G$. For any $\alpha\in\widehat{G}$ the $\chi^\alpha\in\mathrm{Char}(G)$ defined by 
\[\chi^\alpha((x_{\alpha'})_{\alpha'\in\widehat{G}}):=\frac{\mathrm{Tr}(F_{U_\alpha} x_\alpha)}{\dim_q(\alpha)}\]
is called the \emph{indecomposable quantized character} labeled with $\alpha\in\widehat{G}$.
\end{definition}

\begin{remark}
Our previous definition of quantized characters (\cite[Definition 2.1]{Sato1}) is equivalent to the above definition of quantized characters. See Proposition \ref{prop:reason} just below. If $\{\hat\tau^G\}$ is trivial (i.e., $G$ is of Kac type), quantized characters are nothing less than normal tracial states on $W^*(G)$. If $G$ is a compact group, then each normal tratical state on $W^*(G)$ corresponds to each characters of $G$, where a character of $G$ means a positive-definite continuous function $f$ on $G$ satisfying that $f(gh)=f(hg)$ for any $g,h\in G$ and $f(e)=1$.
\end{remark}

We equip $\mathrm{Char}(G)$ with the weak${}^*$ topology induced from $W^*(G)$. Let $\mathrm{KMS}(C^*(G))$ be the set of $\hat\tau^G$-KMS state with the inverse temperature $-1$ on $C^*(G)$, equipped with the weak${}^*$ topology induced from $C^*(G)$. Remark that $\mathrm{Char}(G)$ and  $\mathrm{KMS}(C^*(G))$ are $w^*$-metrizable since $W^*(G)$ and $C^*(G)$ are separable.
\begin{proposition}\label{prop:reason}
The mapping $\chi\in\mathrm{Char}(G)\mapsto\chi|_{C^*(G)}\in\mathrm{KMS}(C^*(G))$ is an affine homeomorphism. In particular, a sequence $\chi_n\in\mathrm{Char}(G)$ converges to $\chi\in\mathrm{Char}(G)$ as $n\to\infty$ if and only if $\chi_n(x)\to\chi(x)$ as $n\to\infty$ for any $x\in C^*(G)$.
\end{proposition}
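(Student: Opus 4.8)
The plan is to make everything explicit through the direct-sum pictures $\mathbb{C}[G]\subset C^*(G)\subset W^*(G)$ and to reduce both sides to probability vectors on $\widehat{G}$. First I would record the structure of the relevant KMS states. Since $\hat\tau^G_t=\prod_{\alpha\in\widehat{G}}\mathrm{Ad}\,F_{U_\alpha}^{\mathrm{i}t}$ acts block-diagonally and each block $B(\mathcal{H}_{U_\alpha})$ is a finite-dimensional factor, testing the KMS condition (at inverse temperature $-1$) on elements $a,b\in B(\mathcal{H}_{U_\alpha})$ alone shows that the restriction of any such KMS state to the $\alpha$-block is a nonnegative multiple of the unique Gibbs state $\mathrm{Tr}(F_{U_\alpha}\,\cdot\,)/\dim_q(\alpha)=\chi^\alpha$. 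Writing $p_\alpha\in\mathbb{C}[G]$ for the unit of the $\alpha$-block and $c_\alpha:=\chi(p_\alpha)\ge 0$, this gives that every $\chi\in\mathrm{Char}(G)$ has the form $\chi=\sum_{\alpha}c_\alpha\chi^\alpha$ with $\sum_\alpha c_\alpha=\chi(1)=1$ by normality; the same block analysis applies verbatim to any $\psi\in\mathrm{KMS}(C^*(G))$.

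The crucial structural input is that the dual of $C^*(G)$ and the predual of $W^*(G)$ coincide: as $C^*(G)$ is the $c_0$-direct sum and $W^*(G)$ the $\ell^\infty$-direct sum of the finite-dimensional spaces $B(\mathcal{H}_{U_\alpha})$, both $C^*(G)^*$ and $W^*(G)_*$ are canonically the $\ell^1$-direct sum of the $B(\mathcal{H}_{U_\alpha})_*$, and the restriction map $W^*(G)_*\to C^*(G)^*$ is an isometric isomorphism carrying positive functionals to positive functionals. In particular every state on $C^*(G)$ is automatically the restriction of a unique normal state on $W^*(G)$, so there is no ``mass at infinity'' to worry about. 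Combining this with the first paragraph, restriction sends $\chi=\sum_\alpha c_\alpha\chi^\alpha\in\mathrm{Char}(G)$ to $\sum_\alpha c_\alpha\,\chi^\alpha|_{C^*(G)}\in\mathrm{KMS}(C^*(G))$; conversely any $\psi\in\mathrm{KMS}(C^*(G))$ equals $\sum_\alpha c_\alpha\,\chi^\alpha|_{C^*(G)}$ for a probability vector $(c_\alpha)_\alpha$ (the block analysis together with $\|\psi\|=1$ forces $\sum_\alpha c_\alpha=1$), and the norm-convergent combination $\sum_\alpha c_\alpha\chi^\alpha$ is again a normal KMS state---since each $\chi^\alpha\in\mathrm{Char}(G)$ and the KMS states form a norm-closed convex set---whose restriction is $\psi$. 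Thus restriction is an affine bijection, with $c_\alpha$ recovered from either side as the value at $p_\alpha\in\mathbb{C}[G]\subset C^*(G)$.

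It remains to prove that restriction is a homeomorphism, and since $\mathrm{Char}(G)$ and $\mathrm{KMS}(C^*(G))$ are metrizable it suffices to argue with sequences. Continuity of restriction is immediate: if $\chi_n\to\chi$ in the weak${}^*$ topology of $W^*(G)$, then in particular $\chi_n(x)\to\chi(x)$ for every $x\in C^*(G)\subset W^*(G)$. The content is the reverse implication, which I expect to be the main obstacle, because convergence is only tested against the smaller algebra $C^*(G)$ yet must be upgraded to all of $W^*(G)$, a priori leaving room for an escape of mass to infinity. I would handle this with Scheff\'e's lemma: if $\chi_n|_{C^*(G)}\to\chi|_{C^*(G)}$, then evaluating at the $p_\alpha\in C^*(G)$ shows the probability vectors converge pointwise, $c^{(n)}_\alpha=\chi_n(p_\alpha)\to\chi(p_\alpha)=c_\alpha$; since all of them are probability vectors on the countable set $\widehat{G}$ with total mass $1$, pointwise convergence forces convergence in $\ell^1$, i.e. $\sum_\alpha|c^{(n)}_\alpha-c_\alpha|\to 0$. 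Then for arbitrary $x=(x_\alpha)_\alpha\in W^*(G)$ one has $|\chi_n(x)-\chi(x)|\le\sum_\alpha|c^{(n)}_\alpha-c_\alpha|\,\big|\mathrm{Tr}(F_{U_\alpha}x_\alpha)\big|/\dim_q(\alpha)\le \|x\|\sum_\alpha|c^{(n)}_\alpha-c_\alpha|\to 0$, using $|\mathrm{Tr}(F_{U_\alpha}x_\alpha)|\le\|x_\alpha\|\,\mathrm{Tr}(F_{U_\alpha})=\|x_\alpha\|\dim_q(\alpha)\le\|x\|\dim_q(\alpha)$. Hence $\chi_n\to\chi$ in the weak${}^*$ topology of $W^*(G)$, proving that the inverse is continuous and completing the proof.
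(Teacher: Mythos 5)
Your proposal is correct and follows essentially the same route as the paper: decompose every KMS state (on either algebra) as a probability combination $\sum_\alpha c_\alpha\chi^\alpha$ over the blocks, observe that restriction is then an affine bijection, and upgrade pointwise convergence of the coefficient vectors to convergence against all of $W^*(G)$. Your explicit use of Scheff\'e's lemma to get $\ell^1$-convergence of the $(c^{(n)}_\alpha)_\alpha$ is in fact a cleaner justification of the step the paper attributes to ``the dominated convergence theorem.''
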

\begin{proof}
By the similar way as \cite[Lemma 2.2]{Sato1}, we can show that any $\chi\in\mathrm{Char}(G)$ can be uniquely decomposed as $\chi=\sum_{\alpha\in\widehat{G}}c_\alpha\chi^\alpha$, where the $c_\alpha$ are nonnegative coefficients satisfying $\sum_{\alpha\in\widehat{G}}c_\alpha=1$. Thus, there exists an affine bijection between $\mathrm{Char}(G)$ and $\mathrm{KMS}(C^*(G))$. In the rest of proof, we show that a sequence $\chi_n\in\mathrm{Char}(G)$ converges to $\chi\in\mathrm{Char}(G)$ if $\chi_n(x)\to\chi(x)$ as $n\to\infty$ for any $x\in C^*(G)$. We denote by 
$\chi_n=\sum_{\alpha}c_\alpha^{(n)}\chi^\alpha$ and $\chi=\sum_{\alpha\in\widehat{G}}c_\alpha\chi^\alpha$
the unique decompositions of $\chi_n$, $\chi$, respectively. By the assumption, $c_\alpha^{(n)}\to c_\alpha$ as $n\to\infty$ for any $\alpha\in\widehat{G}$. Thus, by the dominated convergence theorem with respect to the counting measure on $\widehat{G}$, the sequence $\chi_n(x)$ converges to $\chi(x)$ for any $x\in W^*(G)$.
\end{proof}

\begin{definition}
The \emph{tensor product representation} of two normal $*$-representation $(\pi_1,\mathcal{H}_1)$ and $(\pi_2,\mathcal{H}_2)$ of $W^*(G)$ is defined as $(\pi_1\fancyot \pi_2,\mathcal{H}_1\otimes\mathcal{H}_2)$, where $\pi_1\fancyot \pi_2:=(\pi_1\otimes \pi_2)\circ\hat\delta_G$. The \emph{conjugate representation} of $(\pi,\mathcal{H})$ is defined as $(\pi^c,\overline{\mathcal{H}})$, where $\overline{\mathcal{H}}$ is the conjugate Hilbert space of $\mathcal{H}$ and $\pi^c(x)\bar\xi:=\overline{\pi(\hat R_G(x^*))\xi}$ for any $x\in W^*(G)$ and $\bar\xi\in\overline{\mathcal{H}}$.
\end{definition}

We introduce the notion of tensor products of products quantized characters.

\begin{lemma}
For given two quantized characters $\chi_1,\chi_2$ the $\chi_1\fancyot\chi_2:=(\chi_1\otimes\chi_2)\circ\hat\delta_G$ becomes a quantized character.
\end{lemma}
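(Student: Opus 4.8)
The plan is to check the two defining properties of a quantized character separately for $\chi_1\fancyot\chi_2=(\chi_1\otimes\chi_2)\circ\hat\delta_G$: that it is a normal state, and that it satisfies the $\hat\tau^G$-KMS condition at inverse temperature $-1$. The first is essentially formal. Since $\chi_1,\chi_2$ are normal states on $W^*(G)$, their algebraic tensor product extends uniquely to a normal state $\chi_1\otimes\chi_2$ on the von Neumann algebra tensor product $W^*(G)\bar\otimes W^*(G)$; as $\hat\delta_G$ is a normal unital $*$-homomorphism, the composition $(\chi_1\otimes\chi_2)\circ\hat\delta_G$ is positive, unital, and normal. So the only real content is the KMS condition.

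For the KMS part I would proceed in two steps. First, I record the standard fact that a tensor product of KMS states is KMS for the product flow: if each $\chi_i$ is $\hat\tau^G$-KMS at inverse temperature $-1$, then $\chi_1\otimes\chi_2$ is $(\hat\tau^G_t\otimes\hat\tau^G_t)$-KMS at inverse temperature $-1$ on $W^*(G)\bar\otimes W^*(G)$. This is checked on elementary tensors $a_1\otimes a_2$ by multiplying the two individual interpolating functions (the product is again bounded and analytic on the strip), and then extended by normality and density. Here the concreteness of the present setting is convenient: on $W^*(G)=\ell^\infty\text{-}\bigoplus_{\alpha}B(\mathcal{H}_{U_\alpha})$ the flow acts blockwise as $\mathrm{Ad}\,F_{U_\alpha}^{\mathrm{i}t}$, so the algebraic direct sum $\mathbb{C}[G]$ consists of entire analytic elements and serves as a convenient core of analytic elements for both $\hat\tau^G$ and $\hat\tau^G\otimes\hat\tau^G$.

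The second and decisive step uses the intertwining relation \eqref{eq:comm_action}, namely $\hat\delta_G\circ\hat\tau^G_t=(\hat\tau^G_t\otimes\hat\tau^G_t)\circ\hat\delta_G$, which says precisely that $\hat\delta_G$ transports one flow to the other. For $b\in\mathbb{C}[G]$, applying uniqueness of analytic continuation to this real-time identity gives $\hat\delta_G(\hat\tau^G_z(b))=(\hat\tau^G_z\otimes\hat\tau^G_z)(\hat\delta_G(b))$ for complex $z$, in particular at the complex time dictated by inverse temperature $-1$. Then, writing the KMS identity for $\chi_1\otimes\chi_2$ (in the form $\psi(A\,(\hat\tau^G\otimes\hat\tau^G)_{-\mathrm{i}}(B))=\psi(BA)$ for analytic $B$) with $A=\hat\delta_G(a)$ and $B=\hat\delta_G(b)$, and using that $\hat\delta_G$ is a $*$-homomorphism, the intertwining collapses the left-hand side to $(\chi_1\otimes\chi_2)\big(\hat\delta_G(a\,\hat\tau^G_{-\mathrm{i}}(b))\big)$ and the right-hand side to $(\chi_1\otimes\chi_2)(\hat\delta_G(ba))$. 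This is exactly the $\hat\tau^G$-KMS identity at inverse temperature $-1$ for $(\chi_1\otimes\chi_2)\circ\hat\delta_G$ on the core $\mathbb{C}[G]$, which upgrades to the full KMS condition by normality.

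The main obstacle is the analytic-continuation bookkeeping in the third step: one must justify that $\hat\delta_G(b)$ is analytic for the product flow with analytic continuation $\hat\delta_G(\hat\tau^G_{-\mathrm{i}}(b))$, and that the interpolating functions remain bounded on the relevant strip so that the KMS condition genuinely transfers rather than just holding at the boundary. In the case at hand this is tame, because $\mathbb{C}[G]$ gives entire analytic elements on each finite block and $\hat\delta_G$ restricts to a homomorphism compatible with this grading, so the interpolations reduce to finite-dimensional matrix computations and the density argument is routine.
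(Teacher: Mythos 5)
Your argument is correct and follows essentially the same route as the paper: one first notes that $\chi_1\otimes\chi_2$ is KMS for the product flow $\hat\tau^G\otimes\hat\tau^G$ on $W^*(G)\bar\otimes W^*(G)$ and then transports this through $\hat\delta_G$ via the intertwining relation \eqref{eq:comm_action}. The only difference is that the paper sidesteps all of your analytic-continuation bookkeeping by working with the interpolating-function form of the KMS condition at real times only: the bounded analytic function $F$ witnessing the KMS property of $\chi_1\otimes\chi_2$ for the pair $\hat\delta_G(x)$, $\hat\delta_G(y)$ becomes, after the substitution $(\hat\tau^G_t\otimes\hat\tau^G_t)(\hat\delta_G(y))=\hat\delta_G(\hat\tau^G_t(y))$, precisely the function witnessing the KMS property of $\chi_1\fancyot\chi_2$ for $x,y$, so no core of analytic elements is needed.
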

\begin{proof}
It easy to see that $\chi_1\otimes\chi_2$ is a $\hat\tau^G\otimes\hat\tau^G$-KMS state on $W^*(G)\bar\otimes W^*(G)$. Thus, for any $x,y\in W^*(G)$ there exists a bounded continuous complex function $F$ on the zonal region $\{z\in\mathbb{C}\mid -1\leq\mathrm{Im}(z)\leq0\}$, which is analytic in $\{z\in\mathbb{C}\mid-1<\mathrm{Im}(z)<0\}$, such that
\[F(t)=\chi_1\otimes\chi_2(\delta(x)(\hat\tau^G\otimes\hat\tau^G)_t(\delta(y))),\]
\[F(t-\mathrm{i})=\chi_1\otimes\chi_2((\hat\tau^G\otimes\hat\tau^G)_t(\delta(y))\delta(x))\]
for any $t\in\mathbb{R}$. By Equation \eqref{eq:comm_action}, we have $F(t)=\chi_1\fancyot\chi_2(x\hat\tau^G_t(y))$ and $F(t-\mathrm{i})=\chi_1\fancyot\chi_2(\hat\tau^G_t(y)x)$ for any $t\in\mathbb{R}$. Therefore, $\chi_1\fancyot\chi_2$ is a $\hat\tau^G$-KMS state on $W^*(G)$.
\end{proof}

\section{Quantum group $W^*$-algebras and their unitary representation theory}
\subsection{Quantum group $W^*$-algebras of inductive limits of compact quantum groups}
We will introduce the notion of inductive limits of compact quantum groups as inductive limits of quantum group $W^*$-algebras.

Let $\mathbb{G}=(G_N)_{N=0}^\infty$ be a sequence of compact quantum groups such that $G_0=(\mathbb{C},\mathrm{id}_\mathbb{C})$ and $G_N$ is a quantum subgroup of $G_{N+1}$, i.e., there is a surjective $*$-homomorphism $\theta_N\colon A(G_{N+1})\to A(G_N)$ satisfying $\delta_{G_N}\circ\theta_N=(\theta_N\otimes\theta_N)\circ\delta_{G_{N+1}}$, called the restriction map. Note that $\theta_N|_{\mathcal{A}(G_{N+1})}$ is also surjective (see \cite[Lemma 2.8]{Tomatsu}). By \cite[Lemma 2.10]{Tomatsu}, there exists a faithful normal unital $*$-homomorphism $\Theta_N\colon W^*(G_N)\to W^*(G_{N+1})$ satisfying $(\Theta_N\otimes\mathrm{id})(U^{G_N})=(\mathrm{id}\otimes \theta_N)(U^{G_{N+1}})$, where $\Theta_N$ is the dual map of $\theta_N$. By Equations \eqref{eq:coproduct}, we can prove that
\begin{equation}\label{eq:hrt}
\hat\delta_{G_{N+1}}\circ\Theta_N=(\Theta_N\otimes\Theta_N)\circ\hat\delta_{G_N}.
\end{equation}
Let $\{\hat\tau^{G_N}_t\}_{t\in\mathbb{R}}$ be the dual maps of $\{\tau^{G_N}_t\}_{t\in\mathbb{R}}$ and $\hat R_{G_N}$ the unitary antipode of $\mathcal{A}(G_N)$. Then, by \cite[Lemma 2.9]{Tomatsu}, we have
\begin{equation}\label{eq:comm_res}
\Theta_N\circ\hat\tau^{G_N}_t=\hat\tau^{G_{N+1}}_t\circ\Theta_N,\quad\Theta_N\circ\hat R_{G_N}=\hat R_{G_{N+1}}\circ\Theta_N.
\end{equation}

With those observations, by \cite[Theorem 7]{Takeda}, there exists a von Neumann algebra $W^*(G_\infty)$, where $G_\infty$ is just a symbol, and we have normal unital $*$-homomorphisms $\Theta_N^\infty\colon W^*(G_N)\to W^*(G_\infty)$ satisfying $\Theta_{N+1}^\infty\circ\Theta_N=\Theta_N^\infty$ and the following property: if a von Neumann algebra $M$ and normal unital $*$-homomorphisms $\Phi_N\colon W^*(G_N)\to M$ satisfying that $\Phi_{N+1}\circ\Theta_N=\Phi_N$ for every $N\geq0$, then there exists a unique normal $*$-homomorphism $\Phi\colon W^*(G_\infty)\to M$ satisfying $\Phi\circ\Theta_N^\infty=\Phi_N$ for every $N\geq0$. This $W^*(G_\infty)$ is the \emph{$W^*$-inductive limit} of the inductive system $(W^*(G_N),\Theta_N)_{N=0}^\infty$ in the sense of Takeda \cite{Takeda}. The \emph{Stratila--Voiculescu AF-algebra} $\mathfrak{A}(\mathbb{G})$ of the inductive system $\mathbb{G}$ can be defined as the $C^*$-subalgebra of $W^*(G_\infty)$ generated by $\bigcup_{N=0}^\infty \Theta_N^\infty(C^*(G_N))$.

\begin{remark}\label{Rem:inductivelimit}
Let $\mathfrak{M}(\mathbb{G}):=\varinjlim_N(W^*(G_N),\Theta_N)$ be the inductive limit in the category of $C^*$-algebras and $*$-homomorphisms. Remark that the both $C^*(G_N)\subset W^*(G_N)$ are faithfully embedded into $\mathfrak{M}(\mathbb{G})$ since $\Theta_N$ is injective for every $N\geq0$. A state $\varphi$ on $\mathfrak{M}(\mathbb{G})$ is said to be \emph{locally normal} (see e.g., \cite{Takesaki}) if $\varphi|_{W^*(G_N)}$ is normal for any $N\geq0$. Let $\mathcal{N}$ be the set of all locally normal states on $\mathfrak{M}(\mathbb{G})$. We define $(\pi,\mathcal{H}):=\bigoplus_{\varphi\in\mathcal{N}}(\pi_\varphi, \mathcal{H}_\varphi)$, where $(\pi_\varphi,\mathcal{H}_\varphi)$ is the GNS-representation of $\mathfrak{M}(\mathbb{G})$ associated with $\varphi\in\mathcal{N}$. Then a $W^*$-inductive limit of the inductive system $(W^*(G_N),\Theta_N)_{N=0}^\infty$ is obtained as $\pi(\mathfrak{M}(\mathbb{G}))''$. Indeed, the following facts are known:
\begin{enumerate}
\item $\pi|_{W^*(G_N)}$ is unital and normal for each $N\geq0$.
\item By \cite[Lemma 2]{Takeda}, $\mathcal{N}$ is dense in the set of states on $\mathfrak{M}(\mathbb{G})$ with respect to the weak${}^*$ topology. Thus, $\pi$ is an injective $*$-homomorphism.
\item By \cite[Theorem 1]{Takeda2} and \cite[Lemma 2]{Takeda}, $\mathcal{N}=\{\widetilde\varphi\circ \pi\mid\widetilde\varphi\text{ is normal state on }\pi(\mathfrak{M}(\mathbb{G}))''\}$.
\end{enumerate}
If $M$ is a von Neumann algebra and $\Phi_N\colon W^*(G_N)\to M$ are normal $*$-homomorphisms satisfying $\Phi_{N+1}\circ\Theta_N=\Phi_N$ for any $N\geq0$. By the universality of $\mathfrak{M}(\mathbb{G})$, we have a $*$-homomorphism $\Phi^0\colon\mathfrak{M}(\mathbb{G})\to M$ satisfying $\Phi^0|_{W^*(G_N)}=\Phi_N$ for any $N\geq0$. Then we have
\begin{align*}
\{\widetilde\varphi\circ\Phi^0\mid \widetilde\varphi\text{ is normal state on }M\}
\subseteq\mathcal{N}=\{\widetilde\varphi\circ \pi\mid\widetilde\varphi\text{ is normal state on }\pi(\mathfrak{M}(\mathbb{G}))''\}.
\end{align*}
Thus, by \cite[Theorem 2]{Takeda3}, there is a unique normal $*$-homomorphism $\Phi\colon \pi(\mathfrak{M}(\mathbb{G}))''\to M$ satisfying $\Phi\circ\Theta_N^\infty=\Phi_N$ for any $N\geq0$.
\end{remark}

Here we discuss a Hopf algebra structure of $W^*(G_\infty)$ with a distinguished flow. By \cite[Theorem 10]{Takeda}, we have
\[W^*(G_\infty)\bar\otimes W^*(G_\infty)=\overline{\bigcup_{N\geq0}\Theta_N^\infty(W^*(G_N))\bar\otimes\Theta_N^\infty(W^*(G_N))}^{\mathrm{ SOT}},\]
\[W^*(G_\infty)\bar\otimes W^*(G_\infty)\bar\otimes W^*(G_\infty)=\overline{\bigcup_{N\geq0}\Theta_N^\infty(W^*(G_N))\bar\otimes\Theta_N^\infty(W^*(G_N))\bar\otimes\Theta_N^\infty(W^*(G_N))}^\mathrm{SOT},\]
where the right-hand side of each of the above equations is the closure with respect to the strong operator topology. By Equation \eqref{eq:hrt}, we obtain the unital normal $*$-homomorphism $\hat\delta_\infty\colon W^*(G_\infty)\to W^*(G_\infty)\bar\otimes W^*(G_\infty)$ satisfying $\hat\delta_\infty\circ\Theta_N^\infty=(\Theta_N^\infty\otimes\Theta_N^\infty)\circ\hat\delta_N$ for any $N\geq0$. Then, by Equation \eqref{eq:coproduct}, we have 
$
(\mathrm{id}\otimes\hat\delta_\infty)\circ\hat\delta_\infty=(\hat\delta_\infty\otimes\mathrm{id})\circ\hat\delta_\infty.
$
By Equations \eqref{eq:comm_res}, we also obtain a one-parameter automorphism group $\{\hat\tau^\infty_t\}_{t\in\mathbb{R}}$ on $W^*(G_\infty)$ and an involutive normal $*$-anti-automorphism $\hat R_\infty\colon W^*(G_\infty)\to W^*(G_\infty)$ satisfying 
$
\hat\tau^\infty_t\circ\Theta_N^\infty=\Theta_N^\infty\circ\hat\tau^{G_N}_t$ and $\hat R_\infty\circ\Theta_N^\infty=\hat R_{G_N}
$
for every $N\geq0$. Remark that $\Theta_N^\infty\circ\hat R_{G_N}$ is normal $*$-homomorphism from $ W^*(G_N)$ to the opposite algebra of $W^*(G_\infty)$. Thus, we can apply the universality of $W^*(G_\infty)$ to the family of normal $*$-anti-homomorphisms $\Theta_N^\infty\circ\hat R_{G_N}$. 
Moreover, by Equations \eqref{eq:comm_action}, \eqref{eq:nogi}, for any $t\in\mathbb{R}$ we have
\begin{equation}\label{eq:prfm}
\hat\delta_\infty\circ\hat\tau^\infty_t=(\hat\tau^\infty_t\otimes\hat\tau^\infty_t)\circ\hat\delta_G,\quad\hat\tau^\infty_t\circ\hat R_\infty=\hat R_\infty\circ\hat\tau^\infty_t,\quad\hat\delta_\infty\circ\hat R_\infty=(\hat R_\infty\otimes\hat R_\infty)\circ\hat\delta_\infty^\mathrm{op},.
\end{equation}
where $\hat\delta_\infty^\mathrm{op}:=\hat\delta_\infty\circ\sigma$ and $\sigma\colon W^*(G_\infty)\bar\otimes W^*(G_\infty)\to W^*(G_\infty)\bar\otimes W^*(G_\infty)$ is the flip map.

Following Yamagami's work \cite{Yamagami} and Masuda and Nakagami's work \cite{MasudaNakagami}, we introduce a suitable notion of inductive limits of compact quantum groups and more generally a notion of not necessarily locally compact quantum group.
\begin{definition}\label{def:quantumgroup}
We call the above $G_\infty:=(W^*(G_\infty),\mathfrak{A}(\mathbb{G}),\hat\delta_\infty,\hat R_\infty,\{\hat\tau^\infty_t\}_{t\in\mathbb{R}})$ the \emph{inductive limit quantum group $W^*$-algebra} of $\mathbb{G}$. More generally, we call $G=(M,\mathfrak{A},\delta, R,\{\tau_t\}_{t\in\mathbb{R}})$ a \emph{quantum group $W^*$-algebra} if 
\begin{itemize}
\item $M$ is a von Neumann algebra,
\item $\mathfrak{A}$ is a $\sigma$-weakly dense $C^*$-subalgebra of $M$,
\item $\delta\colon M\to M\bar\otimes M$ is a comultiplication, i.e., unital normal $*$-homomorphism satisfying $(\mathrm{id}\otimes\delta)\circ\delta=(\delta\otimes\mathrm{id})\circ\delta$,
\item $R\colon M\to M$ is an involutive normal $*$-anti-homomorphism called the \emph{unitary antipode} and $\{\tau_t\}_{t\in\mathbb{R}}$ is a one-parameter automorphism group on $M$ preserving $\mathfrak{A}$ called the \emph{deformation automorphism group}. Moreover, $R$ and $\{\tau_t\}_{t\in\mathbb{R}}$ satisfy Equation \eqref{eq:prfm}.
\end{itemize}
\end{definition}

The necessity of $\mathfrak{A}$ might be unclear in the above definition. However, $\mathfrak{A}(\mathbb{G})$ is indeed necessary to give a natural topology on the convex set of quantized characters of $G_\infty$. See Theorem \ref{thm:char} and Remark \ref{rem:waidona} below. If we assume the existence of Haar weight $h$ of $G$, then the $(M, \delta, R,\{\tau_t\}_{t\in\mathbb{R}}, h)$ is a Woronowicz algebra in the sense of Masuda and Nakagami \cite{MasudaNakagami}, which is an approach to locally compact quantum groups based on von Neumann algebras. On the other hand, if $M$ is a direct sum of finite-dimensional von Neumann algebras and $G$ has a conuit $\epsilon$, then $(M,\delta,\epsilon, R,\{\tau_t\}_{t\in\mathbb{R}})$ is nothing less than a compact quantum group $W^*$-algebra. Here we do not assume anything about the algebraic structures of $M$. Instead, we impose $\mathfrak{A}$, which plays a role of the ``topology on $G$'', on our formulation of $G$.

\subsection{Unitary representations and quantized characters of quantum group $W^*$-algebras}
Here we introduce the concepts of unitary representations and quantized characters of quantum group $W^*$-algebras. When a quantum group $W^*$-algebra comes from an inductive system of compact quantum groups, we compare the concepts of quantized characters in this paper and in our previous paper \cite{Sato1}. In the next two definitions and two lemmas, let $G=(M,\mathfrak{A},\delta, R,\{\tau_t\}_{t\in\mathbb{R}})$ be a general quantum group $W^*$-algebra.

\begin{definition}
A normal $\tau$-KMS state on $M$ with inverse temperature $-1$ is called a \emph{quantized character} of a quantum group $W^*$-algebra $G$. We denote by $\mathrm{Char}(G)$ the convex set of quantized characters of $G$ and equip $\mathrm{Char}(G)$ with the topology of pointwise convergence on $\mathfrak{A}$.
\end{definition}

\begin{lemma}
A quantized character $\chi\in\mathrm{Char}(G)$ is extreme if and only if $\chi$ is factorial.
\end{lemma}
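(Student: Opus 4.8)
The plan is to recognize this as the classical operator-algebraic statement that, for a fixed dynamics and inverse temperature, the extreme points of the set of KMS states are exactly the factorial ones, and to prove it by Tomita--Takesaki theory applied to the GNS representation of $\chi$. Write $(\pi_\chi,\mathcal{H}_\chi,\xi_\chi)$ for the GNS triple of $\chi$ and set $\mathcal{M}:=\pi_\chi(M)''$. Since $\chi$ is a normal $\tau$-KMS state at inverse temperature $-1$, the vector $\xi_\chi$ is cyclic and separating for $\mathcal{M}$, and the modular automorphism group of $(\mathcal{M},\xi_\chi)$ is implemented by the modular operator $\Delta^{\mathrm{i}t}$ and satisfies $\pi_\chi(\tau_t(x))=\Delta^{\mathrm{i}t}\pi_\chi(x)\Delta^{-\mathrm{i}t}$; that is, it coincides through $\pi_\chi$ with $\{\tau_t\}_{t\in\mathbb{R}}$ (the inverse temperature being $-1$). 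Recall that $\chi$ is factorial precisely when the center $\mathcal{Z}:=\mathcal{M}\cap\mathcal{M}'$ equals $\mathbb{C}1$.

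The engine of the argument is a correspondence between KMS states dominated by $\chi$ and positive elements of the center. Concretely, I would establish the following claim: if $\psi$ is a $\tau$-KMS state at inverse temperature $-1$ with $\psi\le\kappa\,\chi$ for some $\kappa>0$, then $\psi=\chi(h\,\cdot\,)$ for a unique positive $h\in\mathcal{Z}$ with $0\le h\le\kappa$. To prove this I would first note that, being KMS for the same dynamics, $\psi$ is invariant under the modular group $\{\tau_t\}$ of $\chi$; the Pedersen--Takesaki noncommutative Radon--Nikodym theorem then yields a positive $h$ in the centralizer $\mathcal{M}_\chi=\{x\in\mathcal{M}\mid\tau_t(x)=x\ (t\in\mathbb{R})\}$ with $\psi=\chi(h\,\cdot\,)$. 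One then computes, via the perturbation formula for modular groups, that the modular automorphism group of $\chi(h\,\cdot\,)$ is $\mathrm{Ad}(h^{\mathrm{i}t})\circ\tau_t$, so the hypothesis that $\psi$ is again $\tau$-KMS, i.e.\ has modular group $\{\tau_t\}$, forces $\mathrm{Ad}(h^{\mathrm{i}t})=\mathrm{id}$ for all $t$ and hence $h\in\mathcal{Z}$.

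Granting the claim, both implications follow formally. If $\chi$ is factorial, then $\mathcal{Z}=\mathbb{C}1$; for any decomposition $\chi=\lambda\chi_1+(1-\lambda)\chi_2$ with $\chi_1,\chi_2\in\mathrm{Char}(G)$ and $0<\lambda<1$ we have $\chi_1\le\lambda^{-1}\chi$, so the claim gives $\chi_1=\chi(h\,\cdot\,)$ with $h\in\mathcal{Z}=\mathbb{C}1$, whence $\chi_1=\chi$ and likewise $\chi_2=\chi$; thus $\chi$ is extreme. Conversely, if $\chi$ is not factorial, choose a central projection $p\in\mathcal{Z}$ with $p\neq0,1$. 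Since the modular group fixes the center pointwise we have $\tau_t(p)=p$, and using this together with the centrality of $p$ one checks directly from the KMS condition of $\chi$ (applied to $px$ in place of $x$) that $\chi_1(x):=\chi(px)/\chi(p)$ and $\chi_2(x):=\chi((1-p)x)/\chi(1-p)$ are again $\tau$-KMS states at inverse temperature $-1$. They are distinct, their GNS representations being supported on the orthogonal subspaces $\overline{\pi_\chi(M)p\xi_\chi}$ and $\overline{\pi_\chi(M)(1-p)\xi_\chi}$, and $\chi=\chi(p)\chi_1+\chi(1-p)\chi_2$ is a nontrivial convex combination, so $\chi$ is not extreme.

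The main obstacle is the second half of the claim: upgrading the Radon--Nikodym element $h$, a priori only in the centralizer $\mathcal{M}_\chi$, to an element of the center $\mathcal{Z}$. This is exactly where the full KMS (analyticity) requirement on $\psi$, rather than mere $\tau$-invariance, must be used; the remaining steps are routine consequences of Tomita--Takesaki theory and of the standard fact that the modular automorphism group acts trivially on the center.
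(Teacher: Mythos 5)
Your argument is correct, but it is a from-scratch proof of a result that the paper simply cites: the equivalence of extremality and factoriality for KMS states is \cite[Theorem 5.3.30]{BratteliRobinson2}, whose proof is essentially your ``claim'' (the bijection between KMS states majorized by $\lambda\chi$ and positive elements of the center, \cite[Theorem 5.3.29]{BratteliRobinson2}) together with the formal deduction you give. The one point the paper makes explicitly that you handle only implicitly is the normality issue: the lemma concerns extreme points of the set of \emph{normal} $\tau$-KMS states, whereas the cited theorem is about extremality among all $\tau$-KMS states; the paper bridges this via \cite[Theorem 2.3.19]{BratteliRobinson1} (a state dominated by a multiple of a normal state is normal, so any convex decomposition of $\chi$ inside the larger simplex already lives in the smaller one). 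Your proof sidesteps this because you only ever invoke the claim for states satisfying $\psi\le\kappa\chi$, which are automatically normal --- but it is worth stating this explicitly, since it is exactly what makes ``extreme among normal KMS states'' equivalent to ``extreme among all KMS states''. The only technical point left loose in your write-up is the passage from $h$ in the centralizer to $h$ in the center when $h$ is not invertible: one should first observe that the support projection of a $\tau$-KMS state is central (so one may pass to the reduced algebra where $h$ is non-singular) before running the cocycle argument $\mathrm{Ad}(h^{\mathrm{i}t})=\mathrm{id}$. With that standard repair, both routes are sound; yours buys self-containedness at the cost of redoing textbook modular theory, while the paper's two-line citation is the economical choice.
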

\begin{proof}
By \cite[Theorem 2.3.19]{BratteliRobinson1}, we can show that extreme normal $\tau$-KMS states are also extreme points in the simplex of not necessarily normal $\tau$-KMS states. Thus, the claim immediately follows from \cite[Theorem 5.3.30]{BratteliRobinson2}.
\end{proof}

\begin{definition}
A normal $*$-representation $(\pi,\mathcal{H})$ of $M$ is called a \emph{unitary representation} of $G$ if there exists a one-parameter automorphism group $\{\gamma_t\}_{t\in\mathbb{R}}$ on $\pi(M)$ satisfying $\gamma_t(\pi(x))=\pi(\tau_t(x))$ for any $x\in M$ and $t\in\mathbb{R}$. A unitary representation $(\pi,\mathcal{H})$ of $G$ is called a \emph{factor} representation if $\pi(M)$ is a factor. A unitary representation $(\pi,\mathcal{H})$ of $G$ is called a \emph{quantum finite} (resp. \emph{semifinite}) representation if $\{\gamma_t\}_{t\in\mathbb{R}}$ is the modular automorphism group of some faithful normal state (resp. semifinite weight) on $\pi(M)$.
\end{definition}

It is easy to see that the GNS-representation of any quantized characters of $G$ is a quantum finite representation of $G$. 

\begin{definition}
The \emph{tensor product} representation of two representation $(\pi_1,\mathcal{H}_1),\,(\pi_2,\mathcal{H}_2)$ of $G$ is defined as $(\pi_1\fancyot \pi_2,\mathcal{H}_1\otimes\mathcal{H}_2)$, where $\pi_1\fancyot \pi_2:=(\pi_1\otimes \pi_2)\circ\delta$. The \emph{contragredient} representation of $(\pi,\mathcal{H})$ is defined as $(\pi^c,\overline{\mathcal{H}})$, where $\pi^c(x)\bar\xi:=\overline{\pi( R(x^*))\xi}$ for any $x\in M$ and $\bar\xi\in\overline{\mathcal{H}}$.
\end{definition}


It is easy to show that tensor product representations and contragredient representations of unitary representations are also unitary. However, it is unclear to us whether the contragredient representation of a quantum finite representation is again quantum finite. By the following lemma, the tensor product of any given two quantized characters is also a quantized character.
\begin{lemma}\label{L:Yss}
For two quantized characters $\chi_1,\chi_2$, the $\chi_1\fancyot\chi_2:=(\chi_1\otimes\chi_2)\circ\delta$ becomes a quantized character of $G$.
\end{lemma}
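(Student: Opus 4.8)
The plan is to repeat, now in the general quantum group $W^*$-algebra setting, the argument used for the analogous statement in Section~\ref{sec:cqg}. Two things must be checked: that $\chi_1\fancyot\chi_2$ is a \emph{normal} state, and that it satisfies the $\tau$-KMS condition at inverse temperature $-1$. Normality is immediate, since $\delta\colon M\to M\bar\otimes M$ is a normal $*$-homomorphism and $\chi_1\otimes\chi_2$ is a normal state on $M\bar\otimes M$ (the tensor product of normal states being normal), so the composition $(\chi_1\otimes\chi_2)\circ\delta$ is again a normal state on $M$.

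For the KMS condition the key input is that $\chi_1\otimes\chi_2$ is a $(\tau\otimes\tau)$-KMS state at inverse temperature $-1$ on $M\bar\otimes M$, where $(\tau\otimes\tau)_t:=\tau_t\otimes\tau_t$. I would establish this first: for simple tensors $a=a_1\otimes a_2$, $b=b_1\otimes b_2$ with each factor entire analytic for $\tau$, the individual KMS functions $F_{a_i,b_i}$ of $\chi_i$ exist, and their product is bounded and continuous on the strip $\{-1\le\mathrm{Im}(z)\le 0\}$, analytic in the interior, with boundary values realizing the KMS condition for $\chi_1\otimes\chi_2$. Extending by linearity over the algebraic tensor product and invoking normality then yields the $(\tau\otimes\tau)$-KMS property of $\chi_1\otimes\chi_2$.

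With this in hand, for arbitrary $x,y\in M$ the KMS condition produces a bounded continuous function $F$ on $\{-1\le\mathrm{Im}(z)\le 0\}$, analytic in the interior, with
\[F(t)=(\chi_1\otimes\chi_2)\big(\delta(x)\,(\tau\otimes\tau)_t(\delta(y))\big),\qquad F(t-\mathrm{i})=(\chi_1\otimes\chi_2)\big((\tau\otimes\tau)_t(\delta(y))\,\delta(x)\big)\]
for all $t\in\mathbb{R}$. The first relation in Equation~\eqref{eq:prfm}, namely $\delta\circ\tau_t=(\tau_t\otimes\tau_t)\circ\delta$, gives $(\tau_t\otimes\tau_t)(\delta(y))=\delta(\tau_t(y))$, so that
\[\delta(x)\,(\tau\otimes\tau)_t(\delta(y))=\delta(x\,\tau_t(y)),\]
and likewise for the opposite order. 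Since $\delta$ is multiplicative, this yields $F(t)=(\chi_1\fancyot\chi_2)(x\,\tau_t(y))$ and $F(t-\mathrm{i})=(\chi_1\fancyot\chi_2)(\tau_t(y)\,x)$, which is exactly the $\tau$-KMS condition for $\chi_1\fancyot\chi_2$ at inverse temperature $-1$. Together with normality this shows $\chi_1\fancyot\chi_2\in\mathrm{Char}(G)$.

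The main obstacle is the first step, the rigorous verification that the tensor product state $\chi_1\otimes\chi_2$ is KMS for the product flow on the von Neumann algebra tensor product $M\bar\otimes M$. Although standard, the care needed is in the analytic continuation: one must check that the product of the two individual KMS functions has the correct growth and boundary behaviour on the strip, and that the simple-tensor computation extends, by density of the algebraic tensor product and normality of all states involved, to the analytic elements required to verify the KMS condition. Once this is granted, the remainder is a direct transcription of the compact-case argument, with Equation~\eqref{eq:prfm} playing the role that Equation~\eqref{eq:comm_action} played there.
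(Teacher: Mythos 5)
Your proposal is correct and follows essentially the same route as the paper's proof: observe that $\chi_1\otimes\chi_2$ is a $(\tau\otimes\tau)$-KMS state on $M\bar\otimes M$ and then transport the KMS interpolating function through $\delta$ using the first relation of Equation~\eqref{eq:prfm}. The only difference is that you spell out the verification of the tensor-product KMS property and the normality of the composition, both of which the paper leaves as a remark.
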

\begin{proof}
Remark that $\chi_1\otimes\chi_2$ is a $\tau\otimes\tau$-KMS state on $M\bar\otimes M$. Thus, for any pair $x,y\in M$ there exists a bounded continuous complex function $F$ on $\{z\in\mathbb{C}\mid -1\leq\mathrm{Im}(z)\leq0\}$, which is analytic in $\{z\in\mathbb{C}\mid-1<\mathrm{Im}(z)<0\}$, such that for any $t\in\mathbb{R}$
\[F(t)=\chi_1\otimes\chi_2(\delta(x)(\tau\otimes\tau)_t(\delta(y))),\]
\[F(t-\mathrm{i})=\chi_1\otimes\chi_2((\tau\otimes\tau)_t(\delta(y))\delta(x)).\]
By Equation \eqref{eq:prfm}, we have $F(t)=\chi_1\fancyot\chi_2(x\tau_t(y))$ and $F(t-\mathrm{i})=\chi_1\fancyot\chi_2(\tau_t(y)x)$ for any $t\in\mathbb{R}$. Hence, $\chi_1\fancyot\chi_2$ is $\tau$-KMS state.
\end{proof}

In the rest of this section, let $G_\infty:=(W^*(G_\infty),\mathfrak{A}(\mathbb{G}),\hat\delta_\infty,\hat R_\infty,\{\hat\tau^\infty_t\}_{t\in\mathbb{R}})$ be the inductive limit quantum group $W^*$-algebra of inductive system $\mathbb{G}=(G_N)_{N=0}^\infty$. We will compare the above definition of quantized character of $G_\infty$ with our previous one \cite[Definition 2.2]{Sato1}. Recall that $\{\hat\tau^\infty_t\}_{t\in\mathbb{R}}$ preserves $\mathfrak{A}(G_\infty)$ globally. We denote by the same symbol $\hat\tau^\infty_t$ its restriction to $\mathfrak{A}(G_\infty)$. Let $(e_{N,i})_{i=0}^\infty$ be an approximate unit for $C^*(G_N)$ for every $N\geq1$.

\begin{lemma}\label{L2.4}
Let $(\pi,\mathcal{H})$ be a non-degenerate $*$-representation of $\mathfrak{A}(\mathbb{G})$. Then there is a unique normal extension $\widetilde\pi\colon W^*(G_\infty)\to B(\mathcal{H})$ of $\pi$ if and only if $\pi(e_{N,i})$ converges to $\mathrm{id}_\mathcal{H}$ in the strong operator topology for every $N\geq1$.
\end{lemma}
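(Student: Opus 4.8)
\emph{Setup and reformulation.} The plan is to work through the identifications recalled in Section~\ref{sec:cqg}: each $C^*(G_N)$ is the $c_0$-direct sum $c_0\text{-}\bigoplus_{\alpha\in\widehat{G_N}}B(\mathcal{H}_{U_\alpha})$, whose multiplier algebra is exactly $W^*(G_N)=\ell^\infty\text{-}\bigoplus_{\alpha\in\widehat{G_N}}B(\mathcal{H}_{U_\alpha})$, and $\mathbb{C}[G_N]=\mathrm{alg}\text{-}\bigoplus_\alpha B(\mathcal{H}_{U_\alpha})$ is $\sigma$-weakly dense. The engine of the proof is the standard fact that a non-degenerate $*$-representation of $C^*(G_N)$ extends uniquely to a \emph{normal} unital $*$-representation of $W^*(G_N)$. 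First I would record two reductions. Since $G_0$ is trivial and every $\Theta_N^\infty$ is unital, $\mathfrak{A}(\mathbb{G})$ is unital with unit $1_{W^*(G_\infty)}$, so non-degeneracy forces $\pi(1)=\mathrm{id}_\mathcal{H}$. Setting $\rho_N:=\pi\circ\Theta_N^\infty|_{C^*(G_N)}$, the relation between approximate units and non-degeneracy shows that the condition $\pi(e_{N,i})\to\mathrm{id}_\mathcal{H}$ in SOT is equivalent to the non-degeneracy of $\rho_N$, independently of the chosen approximate unit.

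\emph{Forward direction.} Suppose a normal extension $\widetilde\pi$ exists. Taking the canonical increasing approximate unit $e_{N,i}=\sum_{\alpha\in F_i}p_\alpha$ (with $F_i\uparrow\widehat{G_N}$ finite), one has $e_{N,i}\uparrow 1_N$ $\sigma$-weakly in $W^*(G_N)$; normality of $\Theta_N^\infty$ and of $\widetilde\pi$ then gives $\pi(e_{N,i})=\widetilde\pi(\Theta_N^\infty(e_{N,i}))\uparrow\widetilde\pi(1)=\mathrm{id}_\mathcal{H}$, where the increasing convergence of positive operators upgrades $\sigma$-weak to SOT convergence. This yields the condition.

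\emph{Reverse direction, level by level.} Assuming the condition, each $\rho_N$ is non-degenerate, so I would extend it to a normal unital $*$-homomorphism $\widetilde\pi_N\colon W^*(G_N)\to B(\mathcal{H})$. Normality is where the type-I structure is used: decomposing $\mathcal{H}=\bigoplus_\alpha\overline{\rho_N(p_\alpha)\mathcal{H}}$ identifies $\widetilde\pi_N$ with the block-diagonal amplification $(x_\alpha)_\alpha\mapsto\bigoplus_\alpha(x_\alpha\otimes 1_{\mathcal{K}_\alpha})$, whose vector states are given by summable families and are therefore normal; uniqueness follows from $\sigma$-weak density of $C^*(G_N)$. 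The hard part will be the compatibility $\widetilde\pi_{N+1}\circ\Theta_N=\widetilde\pi_N$: one cannot simply compare these two normal maps on the $C^*$-level, because in general $\Theta_N(C^*(G_N))\not\subset C^*(G_{N+1})$ (a fixed $G_N$-irreducible may occur in infinitely many $G_{N+1}$-irreducibles, so $\Theta_N(p_\alpha)$ fails to lie in the $c_0$-sum). I would circumvent this with finite cutoffs: let $f_j=\sum_{\beta\in E_j}p_\beta$ be the canonical approximate unit of $C^*(G_{N+1})$. For $a\in C^*(G_N)$ the element $f_j\,\Theta_N(a)\,f_j$ has finite support, hence lies in $C^*(G_{N+1})$, and $f_j\,\Theta_N(a)\,f_j\to\Theta_N(a)$ $\sigma$-weakly. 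Applying the normal $\widetilde\pi_{N+1}$ and using $\Theta_{N+1}^\infty\circ\Theta_N=\Theta_N^\infty$,
\[
\widetilde\pi_{N+1}\bigl(f_j\,\Theta_N(a)\,f_j\bigr)
=\pi\bigl(\Theta_{N+1}^\infty(f_j)\bigr)\,\pi\bigl(\Theta_N^\infty(a)\bigr)\,\pi\bigl(\Theta_{N+1}^\infty(f_j)\bigr).
\]
As $j\to\infty$, the two outer factors tend to $\mathrm{id}_\mathcal{H}$ in SOT (this is precisely the hypothesis at level $N+1$), so the right-hand side converges to $\pi(\Theta_N^\infty(a))=\rho_N(a)=\widetilde\pi_N(a)$, while the left-hand side converges to $\widetilde\pi_{N+1}(\Theta_N(a))$ $\sigma$-weakly by normality. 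Thus $\widetilde\pi_{N+1}\circ\Theta_N$ and $\widetilde\pi_N$ agree on the $\sigma$-weakly dense $C^*(G_N)$, and being normal they coincide.

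\emph{Gluing and uniqueness.} Finally I would invoke the universality of $W^*(G_\infty)$: the normal unital $*$-homomorphisms $(\widetilde\pi_N)_{N\geq0}$ satisfy $\widetilde\pi_{N+1}\circ\Theta_N=\widetilde\pi_N$, so there is a unique normal $\widetilde\pi\colon W^*(G_\infty)\to B(\mathcal{H})$ with $\widetilde\pi\circ\Theta_N^\infty=\widetilde\pi_N$. By construction $\widetilde\pi$ agrees with $\pi$ on each $\Theta_N^\infty(C^*(G_N))$, hence on the $C^*$-algebra $\mathfrak{A}(\mathbb{G})$ they generate, so $\widetilde\pi$ extends $\pi$. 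Uniqueness follows because any normal extension of $\pi$ restricts on each $\Theta_N^\infty(W^*(G_N))$ to a normal extension of $\rho_N$, necessarily equal to $\widetilde\pi_N$ by the uniqueness established above, and $\bigcup_N\Theta_N^\infty(W^*(G_N))$ generates $W^*(G_\infty)$.
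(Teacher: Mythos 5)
Your proof is correct and follows essentially the same route as the paper's: extend $\pi|_{C^*(G_N)}$ to a normal representation $\widetilde\pi_N$ of $W^*(G_N)$ level by level, verify the compatibility $\widetilde\pi_{N+1}\circ\Theta_N=\widetilde\pi_N$, and glue via the universal property of the $W^*$-inductive limit. The one place you go beyond the paper is the compatibility step, which the paper dismisses with ``by the uniqueness of normal extensions'': you correctly observe that this is not immediate because $\Theta_N(C^*(G_N))$ need not lie in $C^*(G_{N+1})$ (a fixed $G_N$-irreducible typically occurs in infinitely many $G_{N+1}$-irreducibles), and your cutoff argument with $f_j\,\Theta_N(a)\,f_j$, combined with the hypothesis at level $N+1$ and normality of $\widetilde\pi_{N+1}$, is a legitimate and welcome way to close that gap.
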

\begin{proof}
If there is a normal extension $\widetilde\pi\colon W^*(G_\infty)\to B(\mathcal{H})$, then $\pi(e_{N,i})$ converges to $\pi(1)=\mathrm{id}_\mathcal{H}$ in the strong operator topology for every $N\geq1$. 
Conversely, we assume that $\pi(e_{N,i})$ converges to $\mathrm{id}_\mathcal{H}$ in the strong operator topology for every $N\geq1$. Then the restriction of $(\pi,\mathcal{H})$ to $C^*(G_N)$ is also non-degenerate for every $N\geq1$, and we can prove that there exists a unique extension of $(\pi|_{C^*(G_N)},\mathcal{H})$ to a normal $*$-homomorphism $(\widetilde \pi_N,\mathcal{H})$ of $W^*(G_N)$. By the uniqueness of normal extensions, we have $\widetilde \pi_{N+1}\circ\Theta_N=\widetilde \pi_N$. Therefore, by the universality of $W^*(G_\infty)$, we have a unique normal $*$-representation $(\widetilde \pi,\mathcal{H})$ satisfying that $\widetilde \pi|_{\mathfrak{A}(\mathbb{G})}=\pi$.
\end{proof}

\begin{lemma}\label{L:aki}
Let $\chi$ be a $\hat\tau^\infty$-KMS state on $\mathfrak{A}(\mathbb{G})$ and $(\pi_\chi,\mathcal{H}_\chi,\xi_\chi)$ the associated GNS-triple. Then $\|\chi|_{C^*(G_N)}\|=1$ for every $N\geq1$ if and only if the $*$-representation $(\pi_\chi,\mathcal{H}_\chi)$ satisfies the equivalent conditions in Lemma \ref{L2.4}.
\end{lemma}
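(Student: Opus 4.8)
The plan is to establish the equivalence one level $N$ at a time; since the conditions of Lemma~\ref{L2.4} are imposed for every $N$, the asserted ``for every $N$'' equivalence then follows at once. Write $M_\chi:=\pi_\chi(\mathfrak{A}(\mathbb{G}))''$ and recall that, $\chi$ being a state, $\chi(\cdot)=\langle\pi_\chi(\cdot)\xi_\chi,\xi_\chi\rangle$ with $\|\xi_\chi\|^2=\|\chi\|=1$. First I would arrange each approximate unit $(e_{N,i})_i$ to be increasing with $0\le e_{N,i}\le 1$; then $\pi_\chi(e_{N,i})$ is an increasing net of positive contractions in $M_\chi$, so it converges in the strong operator topology to a positive contraction $P_N\in M_\chi$, namely the orthogonal projection onto $\overline{\pi_\chi(C^*(G_N))\mathcal{H}_\chi}$ (the unit of $\pi_\chi(C^*(G_N))''\subseteq M_\chi$). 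By construction the condition of Lemma~\ref{L2.4} at level $N$ is precisely $P_N=\mathrm{id}_{\mathcal{H}_\chi}$, and the bridge to the other side is the identity
\[
\langle P_N\xi_\chi,\xi_\chi\rangle=\lim_i\chi(e_{N,i})=\|\chi|_{C^*(G_N)}\|,
\]
the last step being the standard expression of the norm of a positive functional through an approximate unit.

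The direction $(\Leftarrow)$ is then immediate and uses nothing about the KMS property: if $P_N=\mathrm{id}$, the displayed identity gives $\|\chi|_{C^*(G_N)}\|=\|\xi_\chi\|^2=1$. For $(\Rightarrow)$, from $\|\chi|_{C^*(G_N)}\|=1=\langle\xi_\chi,\xi_\chi\rangle$ and $\mathrm{id}-P_N\ge 0$ I would first deduce $\langle(\mathrm{id}-P_N)\xi_\chi,\xi_\chi\rangle=0$, hence $(\mathrm{id}-P_N)\xi_\chi=0$; that is, $\xi_\chi$ lies in the range of $P_N$. The delicate point will be passing from ``$\xi_\chi\in\mathrm{range}(P_N)$'' to ``$P_N=\mathrm{id}$'': a priori $P_N$ is not central in $M_\chi$, and because the copies $\Theta_N^\infty(C^*(G_N))$ are \emph{not} nested inside $\mathfrak{A}(\mathbb{G})$, a naive cyclicity argument against the generators does not close.

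The resolution, and the real content of the lemma, is to bring in the KMS hypothesis: the GNS vector of a $\hat\tau^\infty$-KMS state is not merely cyclic but \emph{separating} for $M_\chi$ (equivalently, the induced vector state on $M_\chi$ is faithful), by the standard structure theory of KMS states \cite{BratteliRobinson2}. Since $\mathrm{id}-P_N$ is an element of $M_\chi$ annihilating the separating vector $\xi_\chi$, it must vanish, so $P_N=\mathrm{id}$, as required. I expect this appeal to the separating property of $\xi_\chi$ to be the only substantive step; the remaining items are routine, namely that $P_N\in M_\chi$ (an SOT-limit of elements of $\pi_\chi(C^*(G_N))\subseteq M_\chi$, which is SOT-closed) and that $\mathrm{id}_{\mathcal{H}_\chi}\in M_\chi$ (because the GNS representation $\pi_\chi$ is non-degenerate). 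Note that the argument is carried out for each fixed $N$, so no interplay between different levels is needed beyond bundling the conclusions.
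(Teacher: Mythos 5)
Your proof is correct, and it gets to the conclusion by a genuinely different mechanism than the paper's. The paper proves the forward implication by hand: for a $\hat\tau^\infty$-analytic element $x$ it rewrites $\|(1-\pi_\chi(e_{N,i}))\pi_\chi(x)\xi_\chi\|^2$, via the KMS condition, as $\chi$ applied to a product in which $x$ has been moved out of the way, bounds this by $\|x\hat\tau^\infty_{\mathrm{i}}(x^*)\|\,\|\pi_\chi((1-e_{N,i})^2)\xi_\chi\|$, and sends the last factor to $0$ using $\chi(e_{N,i}),\chi(e_{N,i}^2)\to1$; density of $\pi_\chi(\mathfrak{A}(\mathbb{G})_{\hat\tau^\infty})\xi_\chi$ then yields the SOT convergence. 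You instead package the entire KMS input into the single standard fact that the GNS vector of a KMS state is separating for $\pi_\chi(\mathfrak{A}(\mathbb{G}))''$ (\cite{BratteliRobinson2}, around Corollary 5.3.8 -- the same circle of results the paper invokes in its appendix), after which you only need that the defect $\mathrm{id}-P_N$ is a positive element of that von Neumann algebra annihilating $\xi_\chi$. Both routes rest on $\lim_i\chi(e_{N,i})=\|\chi|_{C^*(G_N)}\|$, and both handle the converse identically; your version is more structural and makes it transparent that the KMS hypothesis enters only through the separating property and is not needed for the converse, while the paper's is self-contained apart from elementary facts about approximate units. One small point you glide over: Lemma \ref{L2.4} is phrased for the fixed approximate units $(e_{N,i})_i$, whereas you pass to increasing ones. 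This is harmless, because for \emph{any} approximate unit the net $\pi(e_{N,i})$ converges in the strong operator topology to the projection onto $\overline{\pi(C^*(G_N))\mathcal{H}}$ (for $\eta$ orthogonal to that subspace one has $\|\pi(e_{N,i})\eta\|^2=\langle\pi(e_{N,i}^*e_{N,i})\eta,\eta\rangle=0$), so the condition in Lemma \ref{L2.4} does not depend on the choice; it would be worth saying this explicitly.
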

\begin{proof}
Assume that $\|\chi|_{C^*(G_N)}\|=1$ for every $N\geq1$. Let $\mathfrak{A}(\mathbb{G})_{\hat\tau^\infty}$ be the set of $\hat\tau^\infty$-analytic elements. For any $x\in\mathfrak{A}(\mathbb{G})_{\hat\tau^\infty}$ we have 
\[\|\pi_\chi(x)\xi_\chi-\pi_\chi(e_{N,i})\pi_\chi(x)\xi_\chi\|^2=\chi(\hat\tau^\infty_{-\mathrm{i}}(x)x^*(1-e_{N,i})^2)\leq\|x\hat\tau^\infty_\mathrm{i}(x^*)\|\|\pi_\chi(1-e_{N,i})^2\xi_\chi\|.\]
By $\|\chi|_{C^*(G_N)}\|=1$, we have $\lim_{i\to\infty}\chi(e_{N,i})=\lim_{i\to\infty}\chi(e_{N,i}^2)=1$, see \cite[Proposition 2.3.11(2)]{BratteliRobinson1}. Namely, $\lim\sup_{i\to\infty}\|\pi_\chi(1-e_{N,i})^2\xi_\chi\|=0$ since $\|\pi_\chi(1-e_{N,i})^2\xi_\chi\|\leq2\|\pi_\chi(1-e_{N,i})\xi_\chi\|$. Therefore, $\pi_\chi(e_{N,i})$ converges to $\mathrm{id}_{\mathcal{H}_\chi}$ in the strong operator topology, since $\mathfrak{A}(\mathbb{G})_{\hat\tau^\infty}$ is norm dense in $\mathfrak{A}(\mathbb{G})$ and hence $\pi_\chi(\mathfrak{A}(\mathbb{G})_{\hat\tau^\infty})\xi_\chi$ is dense in $\mathcal{H}_\chi$. Conversely, if $\pi_\chi(e_{N,i})$ converges to $\mathrm{id}_{\mathcal{H}_\chi}$ in the strong operator topology for every $N\geq1$, then we have 
\[1\geq\|\chi|_{C^*(G_N)}\|\geq\limsup_{i\to\infty}\chi(e_{N,i})=\lim_{i\to\infty}\langle \pi_\chi(e_{N,i})\xi_\chi,\xi_\chi\rangle=1.\]
\end{proof}

Let $\mathrm{KMS}(\mathfrak{A}(\mathbb{G}))^0$ be the set of $\hat\tau^\infty$-KMS states $\varphi$ on $\mathfrak{A}(\mathbb{G})$ satisfying $\|\varphi|_{C^*(G_N)}\|=1$ for every $N\geq1$. We equip $\mathrm{KMS}(\mathfrak{A}(\mathbb{G}))^0$ with the weak${}^*$ topology induced from $\mathfrak{A}(\mathbb{G})$. By the following theorem, the above definition of quantized characters of $G_\infty$ and our previous one \cite[Definition 2.2]{Sato1} agree with each other.
\begin{theorem}\label{thm:char}
For any $\chi\in\mathrm{Char}(G_\infty)$ its restriction $\chi|_{\mathfrak{A}(\mathbb{G})}$ falls into $\mathrm{KMS}(\mathfrak{A}(\mathbb{G}))^0$. Moreover, this correspondence is affine and homeomorphic.
\end{theorem}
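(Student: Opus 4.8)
The plan is to realise the stated correspondence as the restriction map $\chi\mapsto\chi|_{\mathfrak{A}(\mathbb{G})}$, check that it indeed lands in $\mathrm{KMS}(\mathfrak{A}(\mathbb{G}))^0$, construct a two-sided inverse through the GNS construction together with Lemmas \ref{L2.4} and \ref{L:aki}, and finally observe that the homeomorphism assertion is forced by the way the two topologies are defined. Affineness of restriction is immediate, so the substance lies in well-definedness, the bijection, and the KMS-ness of the reconstructed state.

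First I would verify well-definedness. Since $\hat\tau^\infty_t\circ\Theta_N^\infty=\Theta_N^\infty\circ\hat\tau^{G_N}_t$ and $\hat\tau^{G_N}_t$ preserves $C^*(G_N)$, the flow $\{\hat\tau^\infty_t\}$ preserves each $\Theta_N^\infty(C^*(G_N))$ and hence the generated $C^*$-algebra $\mathfrak{A}(\mathbb{G})$; thus for $\chi\in\mathrm{Char}(G_\infty)$ the restriction $\chi|_{\mathfrak{A}(\mathbb{G})}$ is again a $\hat\tau^\infty$-KMS state, obtained by restricting to $x,y\in\mathfrak{A}(\mathbb{G})$ the strip-analytic function $F$ attached to any pair in $W^*(G_\infty)$. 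For the norm condition I fix $N$ and use that the approximate unit $(e_{N,i})_i$ increases $\sigma$-weakly to $1_{W^*(G_N)}$, so $\Theta_N^\infty(e_{N,i})\nearrow 1$ $\sigma$-weakly in $W^*(G_\infty)$; normality of $\chi$ then gives $\chi(\Theta_N^\infty(e_{N,i}))\to\chi(1)=1$, whence $\|\chi|_{C^*(G_N)}\|=\lim_i\chi(\Theta_N^\infty(e_{N,i}))=1$. This places $\chi|_{\mathfrak{A}(\mathbb{G})}$ in $\mathrm{KMS}(\mathfrak{A}(\mathbb{G}))^0$.

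Next I would establish the bijection. Injectivity is clear: two normal states on $W^*(G_\infty)$ agreeing on the $\sigma$-weakly dense $\mathfrak{A}(\mathbb{G})$ coincide by $\sigma$-weak continuity. For surjectivity, take $\varphi\in\mathrm{KMS}(\mathfrak{A}(\mathbb{G}))^0$ with GNS triple $(\pi_\varphi,\mathcal{H}_\varphi,\xi_\varphi)$. Lemma \ref{L:aki} shows that $(\pi_\varphi,\mathcal{H}_\varphi)$ meets the hypotheses of Lemma \ref{L2.4}, so $\pi_\varphi$ extends to a normal unital $*$-representation $\widetilde\pi\colon W^*(G_\infty)\to B(\mathcal{H}_\varphi)$; I then set $\chi:=\langle\widetilde\pi(\cdot)\xi_\varphi,\xi_\varphi\rangle$, a normal state with $\chi|_{\mathfrak{A}(\mathbb{G})}=\varphi$. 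To see that $\chi$ is $\hat\tau^\infty$-KMS on all of $W^*(G_\infty)$ I pass through modular theory. Because $\varphi$ is $\hat\tau^\infty$-KMS, it is $\hat\tau^\infty$-invariant, so $\hat\tau^\infty_t$ is implemented on $\mathcal{H}_\varphi$ by unitaries $u_t$ fixing $\xi_\varphi$; moreover $\xi_\varphi$ is cyclic and separating for $N:=\widetilde\pi(W^*(G_\infty))=\pi_\varphi(\mathfrak{A}(\mathbb{G}))''$, and by the standard correspondence between the KMS condition and modular automorphism groups (cf. \cite{BratteliRobinson2}) the modular group of the faithful normal state $\omega_{\xi_\varphi}$ on $N$ is the flow $\gamma_t:=\mathrm{Ad}(u_t)$ (up to the sign fixed by inverse temperature $-1$). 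Since $\gamma_t\circ\widetilde\pi$ and $\widetilde\pi\circ\hat\tau^\infty_t$ are both normal and agree on the $\sigma$-weakly dense $\mathfrak{A}(\mathbb{G})$, they agree on $W^*(G_\infty)$; hence $\gamma_t$ implements $\hat\tau^\infty$ on $N$ and $\omega_{\xi_\varphi}$ is KMS for it, so $\chi=\omega_{\xi_\varphi}\circ\widetilde\pi$ is a normal $\hat\tau^\infty$-KMS state, i.e. $\chi\in\mathrm{Char}(G_\infty)$ restricting to $\varphi$.

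Finally, the homeomorphism is built into the topologies: $\mathrm{Char}(G_\infty)$ carries the topology of pointwise convergence on $\mathfrak{A}(\mathbb{G})$ and $\mathrm{KMS}(\mathfrak{A}(\mathbb{G}))^0$ the weak${}^*$ topology from $\mathfrak{A}(\mathbb{G})$, so $\chi_n\to\chi$ in the former holds exactly when $\chi_n|_{\mathfrak{A}(\mathbb{G})}\to\chi|_{\mathfrak{A}(\mathbb{G})}$ in the latter, making restriction and its inverse continuous. The step I expect to be the main obstacle is propagating the KMS condition from the $\sigma$-weakly dense $\mathfrak{A}(\mathbb{G})$ to the whole von Neumann algebra $W^*(G_\infty)$: a direct density argument fails because pointwise limits of the strip-analytic functions $F$ of Lemma \ref{L:Yss} need not remain analytic, which is precisely why I route the reconstruction through the modular automorphism group of the GNS vector state rather than through approximation by those functions.
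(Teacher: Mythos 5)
Your proof is correct and follows essentially the same route as the paper: the paper's own argument is just an appeal to Lemmas \ref{L2.4} and \ref{L:aki} (GNS construction plus unique normal extension) for bijectivity, together with the observation that the two topologies are pointwise convergence on $\mathfrak{A}(\mathbb{G})$ by definition. The one step you elaborate that the paper leaves implicit --- propagating the KMS condition from $\mathfrak{A}(\mathbb{G})$ to all of $W^*(G_\infty)$ via the modular automorphism group of the GNS vector state (cf. \cite[Corollary 5.3.4]{BratteliRobinson2}) --- is exactly the right way to close that gap, and your diagnosis of why a naive density argument with the strip-analytic functions would not suffice is accurate.
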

\begin{proof}
By Lemma \ref{L2.4}, \ref{L:aki}, $\chi|_{\mathfrak{A}(\mathbb{G})}$ falls into $\mathrm{KMS}(\mathfrak{A}(\mathbb{G}))^0$ for any $\chi\in\mathrm{Char}(G_\infty)$ and this correspondence is affine and bijective. Moreover, by the definition of topology on $\mathrm{Char}(G_\infty)$, this correspondence is also homeomorphic.
\end{proof}

\begin{corollary}
$\mathrm{Char}(G_\infty)$ is a Choquet simplex.
\end{corollary}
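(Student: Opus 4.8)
The plan is to transport the question, by means of the affine homeomorphism of Theorem~\ref{thm:char}, to the set $\mathrm{KMS}(\mathfrak{A}(\mathbb{G}))^0$ of locally normalized $\hat\tau^\infty$-KMS states on the Stratila--Voiculescu AF-algebra $\mathfrak{A}(\mathbb{G})$, and then to exhibit $\mathrm{KMS}(\mathfrak{A}(\mathbb{G}))^0$ as a face of the full KMS state space. First I would record that $\mathfrak{A}(\mathbb{G})$ is unital: its unit is $\Theta_0^\infty(1)$, coming from $G_0=(\mathbb{C},\mathrm{id}_\mathbb{C})$ together with the unitality of $\Theta_0^\infty$, and $\mathfrak{A}(\mathbb{G})$ is separable. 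Hence the whole set $\mathrm{KMS}(\mathfrak{A}(\mathbb{G}))$ of $\hat\tau^\infty$-KMS states at inverse temperature $-1$ is weak${}^*$-compact and metrizable, and by the classical theory of KMS states (\cite[Section 5.3]{BratteliRobinson2}, the source already invoked for the identification of extreme KMS states with factor states) it is a Choquet simplex.

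Next I would show that $\mathrm{KMS}(\mathfrak{A}(\mathbb{G}))^0$ is a face of $\mathrm{KMS}(\mathfrak{A}(\mathbb{G}))$. The key computation is that, for a state $\varphi$ and the increasing approximate unit $(e_{N,i})_i$ of $C^*(G_N)$, one has $\|\varphi|_{C^*(G_N)}\|=\lim_i\varphi(e_{N,i})=\sup_i\varphi(e_{N,i})$ (as in Lemma~\ref{L:aki}, via \cite[Proposition 2.3.11]{BratteliRobinson1}); in particular $\varphi\mapsto\|\varphi|_{C^*(G_N)}\|$ is affine, being a pointwise limit of the affine maps $\varphi\mapsto\varphi(e_{N,i})$. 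Consequently, if $\varphi=\lambda\varphi_1+(1-\lambda)\varphi_2$ with $\varphi_1,\varphi_2\in\mathrm{KMS}(\mathfrak{A}(\mathbb{G}))$ and $0<\lambda<1$, then for each $N$
\[
1=\|\varphi|_{C^*(G_N)}\|=\lambda\,\|\varphi_1|_{C^*(G_N)}\|+(1-\lambda)\,\|\varphi_2|_{C^*(G_N)}\|,
\]
and since each restriction norm is at most $1$, this forces $\|\varphi_1|_{C^*(G_N)}\|=\|\varphi_2|_{C^*(G_N)}\|=1$. As $N$ was arbitrary, $\varphi_1,\varphi_2\in\mathrm{KMS}(\mathfrak{A}(\mathbb{G}))^0$, so $\mathrm{KMS}(\mathfrak{A}(\mathbb{G}))^0=\bigcap_{N\ge1}\{\varphi\mid\|\varphi|_{C^*(G_N)}\|=1\}$ is an intersection of faces, hence a face.

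Finally I would conclude that a face of a Choquet simplex is again a Choquet simplex, since the lattice property of the cone over $\mathrm{KMS}(\mathfrak{A}(\mathbb{G}))$ restricts to the subcone over any face, so every point of $\mathrm{KMS}(\mathfrak{A}(\mathbb{G}))^0$ has a unique barycentric decomposition into its own extreme points; transporting back through Theorem~\ref{thm:char} gives the claim for $\mathrm{Char}(G_\infty)$. The point requiring care, which I expect to be the main obstacle, is the topological status of this face: because $\varphi\mapsto\|\varphi|_{C^*(G_N)}\|$ is only lower semicontinuous (a supremum of the continuous functions $\varphi\mapsto\varphi(e_{N,i})$), the set $\mathrm{KMS}(\mathfrak{A}(\mathbb{G}))^0$ need not be weak${}^*$-closed a priori, reflecting a possible escape of mass to higher representations in a weak${}^*$-limit. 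One must therefore either verify that local normalization is preserved along the limits arising from $\mathrm{Char}(G_\infty)$ (so that the face is in fact weak${}^*$-closed and the simplex is compact), for which the identification in Theorem~\ref{thm:char} with the normal KMS states on $W^*(G_\infty)$ is the natural tool, or adopt the decomposition-theoretic notion of a simplex, under which the face conclusion already suffices.
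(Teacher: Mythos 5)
Your argument is correct in substance, but it takes a more self-contained route than the paper, whose entire proof is to transport the statement through Theorem~\ref{thm:char} and then cite \cite[Proposition 2.4]{Sato1} for the fact that $\mathrm{KMS}(\mathfrak{A}(\mathbb{G}))^0$ is a Choquet simplex. What you do differently is to actually prove that fact: you realize $\mathrm{KMS}(\mathfrak{A}(\mathbb{G}))^0$ as a face of the full simplex $\mathrm{KMS}(\mathfrak{A}(\mathbb{G}))$ of $\hat\tau^\infty$-KMS states on the unital separable $C^*$-algebra $\mathfrak{A}(\mathbb{G})$ (a compact simplex by \cite[Theorem 5.3.30]{BratteliRobinson2}), using that $\varphi\mapsto\|\varphi|_{C^*(G_N)}\|=\sup_i\varphi(e_{N,i})$ is an affine $[0,1]$-valued function, so that the level set at $1$ is a face and $\mathrm{KMS}(\mathfrak{A}(\mathbb{G}))^0$ is an intersection of such faces. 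This is a legitimate and arguably more informative argument; the only point on which you should commit rather than hedge is your closing disjunction. The first branch fails: $\mathrm{KMS}(\mathfrak{A}(\mathbb{G}))^0$ is genuinely not weak${}^*$-closed, since mass can escape on the non-unital subalgebras $C^*(G_N)$ (already for $N=1$ a sequence of indecomposable quantized characters with labels tending to infinity converges pointwise to $0$ on $C^*(G_N)$), which is consistent with $\mathrm{Char}(G_\infty)$ being non-compact (weak${}^*$ limits of normal states need not be normal). So you must take the second branch, and it does close the argument: your affine function is a pointwise limit of weak${}^*$-continuous affine functions, hence measure-affine, so the unique maximal representing measure of any point of the face is automatically concentrated on the extreme points lying in the face; this yields unique barycentric decomposition within $\mathrm{KMS}(\mathfrak{A}(\mathbb{G}))^0$, which is the decomposition-theoretic sense in which the (non-compact) set $\mathrm{Char}(G_\infty)$ is a Choquet simplex. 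With that one clarification your proof stands on its own, whereas the paper's buys brevity at the cost of outsourcing exactly this point to \cite{Sato1}.
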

\begin{proof}
If follows from the above theorem and \cite[Proposition 2.4]{Sato1}.
\end{proof}

\begin{remark}\label{rem:waidona}
By Proposition \ref{prop:reason}, it is easy to show that $\mathrm{Char}(G_\infty)$ with the topology of pointwise convergence on $\mathfrak{A}(\mathbb{G})$ is homeomorphic to $\mathrm{Char}(G_\infty)$ with the topology of pointwise convergence on $\bigcup_{N\geq0}W^*(G_N)$. For a while, we assume that $G_\infty$ is not just a symbol, but an inductive limit of usual compact group, i.e., $G_\infty=\varinjlim_NG_N$ (equipped with the inductive limit topology). Then the topology on the set of characters of $G_\infty$ by the uniform convergence on compact subsets is equivalent to the weakest topology such that the mappings $\chi\mapsto\int_{G_N}\chi(g)f(g)d\mu_N(g)$ are continuous for any $f\in L^1(G_N,\mu_N)$ and any $N\geq1$, where $\mu_N$ is the Haar probability measure of $G_N$ (see \cite{Hirai}). Moreover, the set of characters of $G_\infty$ is homeomorphic to the set of tracial states $\varphi$ on $\mathfrak{A}(G_\infty)$ satisfying that $\|\varphi|_{C^*(G_N)}\|=1$ for every $N\geq1$ (see \cite[Section 2.1]{EnomotoIzumi} and \cite[Section 2.4]{Sato1}). By Theorem \ref{thm:char}, they are also homeomorphic to the set of normal tracial states on $W^*(G_\infty)$ with the topology of pointwise convergence on $\mathfrak{A}(G_\infty)$, which is nothing less than the set of quantized characters since $\{\hat\tau^\infty_t\}_{t\in\mathbb{R}}$ is trivial in this case. 
\end{remark}

\begin{remark}
We can regard $W^*(G_\infty)$ as the usual group von Neumann algebra of $G_\infty$. Indeed, if $G_\infty$ is an inductive limit of usual compact groups, then we have a bijective correspondence between the set of all unitary representations of $G_\infty$ and the set of all non-degenerate normal $*$-representations of $W^*(G_\infty)$. Let $(\pi,\mathcal{H})$ be a unitary representation of $G_\infty$. Then each restriction $\pi|_{G_N}$ induces a unique non-degenerate representation $(\tilde\pi_N,\mathcal{H})$ of $W^*(G_N)$, and we have $\tilde\pi_{N+1}\circ\Theta_N=\tilde\pi_N$ for every $N$. Thus, we obtain a non-degenerate representation $(\tilde\pi,\mathcal{H})$ of $W^*(G_\infty)$ satisfying $\tilde\pi\circ\Theta_N^\infty=\tilde\pi_N$ for every $N$. We can also obtain the reverse correspondence in the same way. Moreover, we have $\tilde\pi(W^*(G_\infty))''=\pi(G_\infty)''$. Thus, $W^*(G_\infty)$ and $G_\infty$ must have the same representation theory.
\end{remark}

\section{The inductive limit $U_q(\infty)$ and their tensor product representations}
\subsection{Basics of quantum unitary groups $U_q(N)$}
Let $q\in(0,1)$ and $U_q(N)$ the quantum unitary group of rank $N$. See e.g., \cite{NoumiYamadaMimachi}, \cite{KilSch} for its definition. We can regard $U_q(N)$ as a quantum subgroup $U_q(N+1)$. Thus, their inductive system $\mathbb{U}_q$ is well defined. It is known that the branching rule of irreducible representations of $U_q(N)$ does not depend on the quantization parameter $q$, that is, the following facts hold (see \cite{Zelobenko}, \cite{NoumiYamadaMimachi}):
\begin{itemize}
\item All the equivalence classes of irreducible representations of $U_q(N)$ and $U(N)$ are labeled with the set of \emph{signatures} given as $\mathrm{Sign}_N:=\{\lambda=(\lambda_n)_{n=1}^N\in\mathbb{Z}^N\mid \lambda_1\geq\lambda_2\geq\cdots\geq\lambda_N\}$, and we set $\mathrm{Sign}_0:=\{*\}$.
\item The restriction of the irreducible representation labeled with $\nu\in\mathrm{Sign}_{N+1}$ contains the irreducible one labeled with $\lambda\in\mathrm{Sign}_N$ if and only if $\nu_1\geq\lambda_1\geq\nu_2\geq\cdots\geq\lambda_N\geq\nu_{N+1}$. We write $\lambda\prec\nu$ in this case. Moreover, we assume that $*\prec\lambda$ for every $\lambda\in\mathrm{Sign}_1$.
\end{itemize} 

\begin{remark}
By the above two facts, $U_q(N)$ and $U(N)$ have the same fusion ring and the dimension of each irreducible representation does not depend on $q$. Therefore, by \cite[Theorem 2.7.10, Proposition 2.7.12]{NeshveyevTuset}, the Haar state of $U_q(N)$ is faithful.
\end{remark}

\begin{remark}\label{Rem:3.2}
Noumi, Yamada and Mimachi (\cite[Theorem 2.5]{NoumiYamadaMimachi}) gave a concrete realization of irreducible representations of $U_q(N)$. In particular, the irreducible representation of $U_q(N)$ corresponding to $(k,\dots,k)$ for any $k\in\mathbb{Z}$ is given as $\mathrm{det}_q^{k}(N)\in\mathbb{C}\otimes A(U_q(N))$, where $\mathrm{det}_q(N)$ is the quantum determinant. See \cite{NoumiYamadaMimachi} for more details.
\end{remark}

Let $T(N)=(C(\mathbb{T}^N),\delta_{T(N)})$ be the compact quantum group of $N$ dimensional torus $\mathbb{T}^N$ (see \cite[Example 1.2.2]{NeshveyevTuset}). Then we can regard $T(N)$ is a quantum subgroup of $U_q(N)$. See \cite{KilSch} for example. We denote by $\pi_N$ the restriction map.

\subsection{$q$-Coherent systems and $q$-Schur generating functions}
Here we briefly summarize $q$-coherent systems and $q$-Schur generating functions. We define $w_q(\lambda,\nu):=q^{(N+1)|\lambda|-N|\nu|}$ for any pair $\lambda\in\mathrm{Sign}_N$, $\nu\in\mathrm{Sign}_{N+1}$ with $\lambda\prec\nu$. See \cite{Sato1} for the representation-theoretic meaning of this definition. A sequence of probability measures $P_N$ on $\mathrm{Sign}_N$ is called a \emph{$q$-coherent system} if 
\[\frac{P_N(\{\lambda\})}{\dim_q(\lambda)}=\sum_{\nu\in\mathrm{Sign}_{N+1}:\lambda\prec\nu}w_q(\lambda,\nu)\frac{P_{N+1}(\{\nu\})}{\dim_q(\nu)}\]
holds for every $\lambda\in\mathrm{Sign}_N$ and $N\geq1$, where $\dim_q(\lambda)$ is the quantum dimension of $\lambda\in\mathrm{Sign}_N=\widehat{U_q(N)}$. 

For a probability measure $P_N$ on $\mathrm{Sign}_N$ its \emph{$q$-Schur generating function} $\mathcal{S}(z_1,\dots,z_N;P_N)$ is defined as
\[\mathcal{S}(x_1,\dots,x_N;P_N):=\sum_{\lambda\in\mathrm{Sign}_N}P_N(\{\lambda\})\frac{s_\lambda(x_1,\dots,x_N)}{s_\lambda(1,q^{-2},\dots,q^{-2(N-1)})},\]
where $s_\lambda(x_1,\dots,x_N)$ is the Schur (Laurent) polynomial with label $\lambda\in\mathrm{Sign}_N$. Remark that $S(x_1,\dots,x_N;P_N)$ absolutely converges on $\mathcal{T}_N:=\{(x_1,\dots,x_N)\in\mathbb{C}^N\mid |x_i|=q^{-2(i-1)}\}$ for any probability measure $P_N$ on $\mathrm{Sign}_N$. See \cite[Proposition 4.5]{Gorin12}. 

In what follows, we use the same symbol $U_q(N)$ to denote the associated compact quantum group $W^*$-algebra. For any quantized character $\chi$ of $U_q(N)$ there is a unique probability measure $P_N$ on $\mathrm{Sign}_N$ satisfying that
\begin{equation}\label{eq:char_prob}
(\chi\otimes\pi_N)(U^{U_q(N)})(z_1,\dots,z_N)=\mathcal{S}(z_1,q^{-2}z_2,\dots,q^{-2(N-1)}z_N;P_N)
\end{equation}
on $(z_1,\dots,z_N)\in\mathbb{T}^N$. See Section \ref{sec:cqg} for the notation $U^{U_q(N)}$. In particular, we have
\begin{equation}\label{eq:char_irr}
(\chi^\lambda\otimes\pi_N)(U^{U_q(N)})(z_1,\dots,z_N)=\frac{s_\lambda(z_1,q^{-2}z_2,\dots,q^{-2(N-1)}z_N)}{s_\lambda(1,q^{-2},\dots,q^{-2(N-1)})},
\end{equation}
where $\chi^\lambda$ is the indecomposable quantized character with label $\lambda\in\mathrm{Sign}_N$. 

We denote by $U_q(\infty)$ the inductive limit quantum group $W^*$-algebra of $\mathbb{U}_q$, called the \emph{infinite-dimensional quantum unitary group}. By Theorem \ref{thm:char} and \cite[Section 3]{Sato1}, two simplexes of $q$-coherent systems and quantized characters of $U_q(\infty)$ are affine homeomorphic. Indeed, Equation \eqref{eq:char_prob} gives an affine homeomorphism between them. Moreover, their extreme points are completely parametrized by $\mathcal{N}:=\{(\theta_i)_{i=1}^\infty\in\mathbb{Z}^\infty\mid\theta_1\leq\theta_2\leq\cdots\}$. See \cite{Sato1}, \cite{Gorin12} for more details. In particular, if $\chi$ is the quantized character of $U_q(\infty)$ corresponding to a $q$-coherent system $(P_N)_{N=1}^\infty$, then we have
\begin{equation}\label{eq:char_prob_inf}
(\chi|_{W^*(U_q(N))}\otimes\pi_N)(U^{U_q(N)})(z_1,\dots,z_N)=\mathcal{S}(z_1,q^{-2}z_2,\dots,q^{-2(N-1)}z_N;P_N).
\end{equation}



\subsection{Tensor product representation of $U_q(\infty)$ and $q$-Schur generating functions}
In this section, we discuss tensor product representations of $U_q(\infty)$ and give a representation theoretic interpretation of the transformations $A_k$ on $\mathcal{N}$ defined as $A_k((\theta_i)_{i=1}^\infty):=(\theta_i+k)_{i=1}^\infty$, which often appear in the analysis of $q$-central probability measures (see \cite{Gorin12}, \cite{Cuenca}).

\begin{proposition}\label{Prop:suzu}
Let $\chi,\chi_1,\chi_2$ be quantized characters of $U_q(N)$ and $P,P_1,P_2$ their corresponding probability measures on $\mathrm{Sign}_N$, respectively. Then $\chi=\chi_1\fancyot \chi_2$ if and only if 
\begin{equation}\label{eq:prop3.1}
\mathcal{S}(x_1,\dots,x_N;P)=\mathcal{S}(x_1,\dots,x_N;P_1)\mathcal{S}(x_1,\dots,x_N;P_2)
\end{equation}
holds for every $(x_1,\dots,x_N)\in\mathcal{T}_N$.
\end{proposition}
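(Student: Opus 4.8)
The plan is to derive the product formula \eqref{eq:prop3.1} from the fundamental identity \eqref{eq:coproduct} for the universal unitary $U:=U^{U_q(N)}$, exploiting the fact that the restriction map $\pi_N$ lands in the \emph{commutative} algebra $C(\mathbb{T}^N)$. First I would unwind the definition $\chi_1\fancyot\chi_2=(\chi_1\otimes\chi_2)\circ\hat\delta_{U_q(N)}$ and use $(\hat\delta_{U_q(N)}\otimes\mathrm{id})(U)=U_{23}U_{13}$ to write
\[
((\chi_1\fancyot\chi_2)\otimes\pi_N)(U)=(\chi_1\otimes\chi_2\otimes\pi_N)(U_{23}U_{13}).
\]
The crucial observation is that for each fixed $z\in\mathbb{T}^N$ the functional $\phi_z:=\mathrm{ev}_z\circ\pi_N$ is a \emph{character} of $A(U_q(N))$, since $\pi_N$ is a $*$-homomorphism into the commutative algebra $C(\mathbb{T}^N)$ and $\mathrm{ev}_z$ is multiplicative. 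Writing $U$ formally as $\sum_k a_k\otimes b_k$ with first legs in $W^*(U_q(N))$ and second legs carrying the matrix coefficients in $\mathcal{A}(U_q(N))$, the third leg of $U_{23}U_{13}$ receives the product $b_jb_k$, which $\phi_z$ splits as $\phi_z(b_j)\phi_z(b_k)$. Hence the two tensor legs decouple and I obtain the pointwise multiplicativity
\[
((\chi_1\fancyot\chi_2)\otimes\pi_N)(U)=\big((\chi_1\otimes\pi_N)(U)\big)\cdot\big((\chi_2\otimes\pi_N)(U)\big)
\]
as functions on $\mathbb{T}^N$.

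Granting this, the forward implication is immediate: if $\chi=\chi_1\fancyot\chi_2$, then the three instances of \eqref{eq:char_prob} turn the displayed product identity into
\[
\mathcal{S}(z_1,q^{-2}z_2,\dots,q^{-2(N-1)}z_N;P)=\mathcal{S}(z_1,q^{-2}z_2,\dots,q^{-2(N-1)}z_N;P_1)\,\mathcal{S}(z_1,q^{-2}z_2,\dots,q^{-2(N-1)}z_N;P_2)
\]
for all $z\in\mathbb{T}^N$, and the substitution $x_i=q^{-2(i-1)}z_i$ identifies this locus with $\mathcal{T}_N$, giving \eqref{eq:prop3.1}. For the converse I would use that $\chi_1\fancyot\chi_2$ is itself a quantized character by Lemma \ref{L:Yss}; letting $P'$ be its corresponding measure, the forward direction already gives $\mathcal{S}(\,\cdot\,;P')=\mathcal{S}(\,\cdot\,;P_1)\mathcal{S}(\,\cdot\,;P_2)=\mathcal{S}(\,\cdot\,;P)$ on $\mathcal{T}_N$. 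It then suffices to know that a probability measure on $\mathrm{Sign}_N$ is determined by the restriction of its $q$-Schur generating function to $\mathcal{T}_N$; this injectivity follows from the linear independence of the Schur Laurent polynomials $s_\lambda$ (equivalently, from the bijectivity of the correspondence $\chi\leftrightarrow P$ recorded after \eqref{eq:char_prob}), whence $P=P'$ and therefore $\chi=\chi_1\fancyot\chi_2$.

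The main obstacle I anticipate is bookkeeping rather than conceptual: one must justify the leg manipulations for the genuinely infinite object $U=\bigoplus_\lambda U_\lambda$, ensuring that applying the normal functional $\chi_1\otimes\chi_2$ together with the coproduct commutes with the infinite direct sum and that all the resulting series converge. This is controlled by normality of the quantized characters and by the absolute convergence of the $q$-Schur generating functions on $\mathcal{T}_N$ noted after their definition, so the formal computation with $\sum_k a_k\otimes b_k$ can be made rigorous by truncating to finitely many $\lambda\in\mathrm{Sign}_N$ and passing to the limit. A secondary point to verify is simply that $\mathrm{ev}_z\circ\pi_N$ is multiplicative on the dense $*$-subalgebra $\mathcal{A}(U_q(N))$ where the entries of $U_\lambda$ live, which is clear since $\pi_N$ is a $*$-homomorphism into a commutative $C^*$-algebra.
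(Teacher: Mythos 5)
Your proposal is correct and follows essentially the same route as the paper: the leg computation $(\hat\delta\otimes\mathrm{id})(U)=U_{23}U_{13}$ combined with the commutativity of $C(\mathbb{T}^N)$ to get $((\chi_1\fancyot\chi_2)\otimes\pi_N)(U)=(\chi_1\otimes\pi_N)(U)\cdot(\chi_2\otimes\pi_N)(U)$, followed by the bijective correspondence $\chi\leftrightarrow P$ of Equation \eqref{eq:char_prob}. You merely spell out the converse direction (via Lemma \ref{L:Yss} and injectivity of $P\mapsto\mathcal{S}(\,\cdot\,;P)$) and the normality/convergence bookkeeping that the paper leaves implicit.
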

\begin{proof}
Let $U_N:=U^{U_q(N)}$. By Equations \eqref{eq:coproduct}, we have
\begin{align*}
[(\chi_1\fancyot\chi_2)\otimes\pi_N](U_N)
&=(\chi_1\otimes\chi_2\otimes\pi_N)(U_{N23}U_{N13})\\
&=(\chi_1\otimes\chi_2\otimes\mathrm{id})((\mathrm{id}\otimes\pi_N)(U_N)_{23}(\mathrm{id}\otimes\pi_N)(U_N)_{13})\\
&=(\chi_1\otimes\pi_N)(U_N)_{13}(\chi_2\otimes\pi_N)(U_N)_{23}
\end{align*}
Thus, by Equation \eqref{eq:char_prob}, $\chi=\chi_1\fancyot\chi_2$ if and only if Equation \eqref{eq:prop3.1} holds.
\end{proof}

This type of claim holds even in the infinite-dimensional case.

\begin{theorem}\label{prop:3.2}
Let $\chi,\chi_1,\chi_2$ be quantized characters of $U_q(\infty)$ and $(P_N)_N,(P_{1,N})_N,(P_{2,N})_N$ their corresponding $q$-coherent systems, respectively. Then $\chi=\chi_1\fancyot\chi_2$ if and only if Equation \eqref{eq:prop3.1} holds for every $N\geq1$.
\end{theorem}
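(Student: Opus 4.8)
The plan is to reduce the infinite-dimensional statement to the finite-dimensional Proposition \ref{Prop:suzu} by pulling everything back along the canonical maps $\Theta_N^\infty\colon W^*(U_q(N))\to W^*(U_q(\infty))$. Two preliminary facts drive the reduction. First, the subset $\bigcup_{N\geq0}\Theta_N^\infty(W^*(U_q(N)))$ is an increasing union of $*$-subalgebras containing the generators $\bigcup_N\Theta_N^\infty(C^*(U_q(N)))$ of the $\sigma$-weakly dense algebra $\mathfrak{A}(\mathbb{U}_q)$, hence it is itself $\sigma$-weakly dense in $W^*(U_q(\infty))$; since quantized characters are normal states, two of them coincide as soon as their pullbacks $(-)\circ\Theta_N^\infty$ agree for every $N$. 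Second, each pullback $\chi\circ\Theta_N^\infty$ is a quantized character of $U_q(N)$: it is normal because $\Theta_N^\infty$ is a normal unital $*$-homomorphism, and the $\hat\tau^{U_q(N)}$-KMS property is inherited from the $\hat\tau^\infty$-KMS property of $\chi$ via the intertwining $\hat\tau^\infty_t\circ\Theta_N^\infty=\Theta_N^\infty\circ\hat\tau^{U_q(N)}_t$. By the correspondence \eqref{eq:char_prob_inf}, the probability measure on $\mathrm{Sign}_N$ associated with $\chi\circ\Theta_N^\infty$ is exactly $P_N$, and likewise $\chi_1\circ\Theta_N^\infty\leftrightarrow P_{1,N}$ and $\chi_2\circ\Theta_N^\infty\leftrightarrow P_{2,N}$.

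The key computation is that restriction to level $N$ intertwines the two tensor-product operations. Using the relation $\hat\delta_\infty\circ\Theta_N^\infty=(\Theta_N^\infty\otimes\Theta_N^\infty)\circ\hat\delta_N$ established in Section 3, for any $N\geq1$ we obtain
\[
(\chi_1\fancyot\chi_2)\circ\Theta_N^\infty=(\chi_1\otimes\chi_2)\circ\hat\delta_\infty\circ\Theta_N^\infty=\bigl((\chi_1\circ\Theta_N^\infty)\otimes(\chi_2\circ\Theta_N^\infty)\bigr)\circ\hat\delta_N=(\chi_1\circ\Theta_N^\infty)\fancyot(\chi_2\circ\Theta_N^\infty),
\]
where the final $\fancyot$ is the tensor product of quantized characters of $U_q(N)$ formed with $\hat\delta_N$. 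In other words, the restriction of $\chi_1\fancyot\chi_2$ to level $N$ is the level-$N$ tensor product of the restrictions of $\chi_1$ and $\chi_2$.

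Combining these observations, $\chi=\chi_1\fancyot\chi_2$ holds if and only if $\chi\circ\Theta_N^\infty=(\chi_1\circ\Theta_N^\infty)\fancyot(\chi_2\circ\Theta_N^\infty)$ for every $N\geq1$: the ``only if'' direction is immediate by restricting, and the ``if'' direction follows from the density/normality argument of the first paragraph, noting that $\chi_1\fancyot\chi_2$ is itself a quantized character by Lemma \ref{L:Yss}. Applying Proposition \ref{Prop:suzu} to the three quantized characters $\chi\circ\Theta_N^\infty,\ \chi_1\circ\Theta_N^\infty,\ \chi_2\circ\Theta_N^\infty$ of $U_q(N)$, whose associated measures are $P_N,P_{1,N},P_{2,N}$, this level-$N$ identity is equivalent to the multiplicativity \eqref{eq:prop3.1} of $q$-Schur generating functions at level $N$. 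Therefore $\chi=\chi_1\fancyot\chi_2$ if and only if \eqref{eq:prop3.1} holds for every $N\geq1$. Every step is routine once the intertwining relation for $\hat\delta_\infty$ is in hand; the only point demanding genuine care is the passage from level-wise equality to global equality of quantized characters, which is precisely where the $\sigma$-weak density of $\bigcup_N\Theta_N^\infty(W^*(U_q(N)))$ together with the normality of quantized characters enters.
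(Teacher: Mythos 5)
Your proposal is correct and follows essentially the same route as the paper: reduce to level $N$ via the $\sigma$-weak density of $\bigcup_{N}\Theta_N^\infty(W^*(U_q(N)))$ and the normality of quantized characters, check that restriction intertwines the two tensor products through $\hat\delta_\infty\circ\Theta_N^\infty=(\Theta_N^\infty\otimes\Theta_N^\infty)\circ\hat\delta_N$, and invoke Proposition \ref{Prop:suzu}. The paper's proof is just a terser version of the same argument, leaving implicit the intertwining computation and the identification of the restricted characters with $P_N$, $P_{1,N}$, $P_{2,N}$ that you spell out.
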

\begin{proof}
Since $\bigcup_{N\geq0}W^*(U_q(N))$ is $\sigma$-weakly dense in $W^*(U_q(\infty))$, we obtain that $\chi=\chi_1\fancyot\chi_2$ if and only if $\chi|_{W^*(U_q(N))}=(\chi_1\fancyot\chi_2)|_{W^*(U_q(N))}$ for every $N\geq1$. Thus, this theorem immediately follows from Proposition \ref{Prop:suzu}.
\end{proof}

\begin{corollary}\label{theorem:hiro}
Let $\chi_\theta$ be the extreme quantized character corresponding to $\theta\in\mathcal{N}$. Then $\chi_\theta\fancyot\chi_{(k,k,\dots)}=\chi_{A_k(\theta)}$ for every $k\in\mathbb{Z}$.
\end{corollary}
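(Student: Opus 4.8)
The plan is to reduce the claim to Theorem \ref{prop:3.2} by computing the $q$-Schur generating functions of both factors and verifying that their product is the $q$-Schur generating function of $\chi_{A_k(\theta)}$ at every level $N$. First I would identify the three relevant $q$-coherent systems: let $(P_N)_N$ be the system associated with $\chi_\theta$, let $(Q_N)_N$ be the one associated with the second factor $\chi_{(k,k,\dots)}$, and let $(R_N)_N$ be the one associated with $\chi_{A_k(\theta)}$. By Theorem \ref{prop:3.2}, establishing $\chi_\theta\fancyot\chi_{(k,k,\dots)}=\chi_{A_k(\theta)}$ is equivalent to verifying
\[
\mathcal{S}(x_1,\dots,x_N;R_N)=\mathcal{S}(x_1,\dots,x_N;P_N)\,\mathcal{S}(x_1,\dots,x_N;Q_N)
\]
for every $N\geq1$ and every $(x_1,\dots,x_N)\in\mathcal{T}_N$, so the whole argument becomes a computation of generating functions.

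The key observation is that $\chi_{(k,k,\dots)}$ is the image of the quantum determinant representation: its restriction to each $U_q(N)$ is the indecomposable quantized character $\chi^{(k,\dots,k)}$ labeled by the signature $(k,\dots,k)\in\mathrm{Sign}_N$, as follows from Remark \ref{Rem:3.2}. Hence by Equation \eqref{eq:char_irr} its $q$-Schur generating function is
\[
\mathcal{S}(x_1,\dots,x_N;Q_N)=\frac{s_{(k,\dots,k)}(x_1,\dots,x_N)}{s_{(k,\dots,k)}(1,q^{-2},\dots,q^{-2(N-1)})}=(x_1x_2\cdots x_N)^k,
\]
using the elementary identity $s_{(k,\dots,k)}(x_1,\dots,x_N)=(x_1\cdots x_N)^k$, which forces the normalizing denominator to cancel. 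So multiplication by $\mathcal{S}(\,\cdot\,;Q_N)$ is simply multiplication by the monomial $(x_1\cdots x_N)^k$.

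It then remains to check that multiplying the $q$-Schur generating function of $\chi_\theta$ by $(x_1\cdots x_N)^k$ yields precisely the $q$-Schur generating function of $\chi_{A_k(\theta)}$ at each level $N$. This is where the combinatorial heart lies, and the main obstacle is bookkeeping at the level of the underlying coherent systems: I would use the stability of the Schur polynomials under the shift $\lambda\mapsto\lambda+(k,\dots,k)$, namely $s_{\lambda+(k,\dots,k)}(x_1,\dots,x_N)=(x_1\cdots x_N)^k\,s_\lambda(x_1,\dots,x_N)$, together with the compatibility of this shift with the evaluation $s_\lambda(1,q^{-2},\dots,q^{-2(N-1)})$ appearing in the normalization. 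Combining these, term by term in the defining series of $\mathcal{S}(\,\cdot\,;P_N)$, reindexing $\lambda\mapsto\lambda+(k,\dots,k)$ matches exactly the series defining $\mathcal{S}(\,\cdot\,;R_N)$, where $R_N$ is the pushforward of $P_N$ under this shift. Finally I would confirm that this level-by-level shift is consistent with the parametrization of extreme characters by $\mathcal{N}$, so that the shifted system indeed corresponds to $A_k(\theta)=(\theta_i+k)_{i=1}^\infty$; this last identification is routine given the correspondence recorded after Equation \eqref{eq:char_prob_inf}.
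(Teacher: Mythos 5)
Your overall strategy coincides with the paper's: reduce to Theorem \ref{prop:3.2} and verify the product identity of $q$-Schur generating functions level by level via the shift identity for Schur polynomials. However, your explicit formula for the second factor is wrong. By Equation \eqref{eq:char_irr}, the generating function of the point mass at $(k,\dots,k)\in\mathrm{Sign}_N$ is $s_{(k,\dots,k)}(x_1,\dots,x_N)/s_{(k,\dots,k)}(1,q^{-2},\dots,q^{-2(N-1)})$, and the denominator equals $q^{-kN(N-1)}$; it does not cancel. The correct value is $x_1^k(q^2x_2)^k\cdots(q^{2(N-1)}x_N)^k$, not $(x_1\cdots x_N)^k$ (these differ by the nonzero constant $q^{kN(N-1)}$ on $\mathcal{T}_N$). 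The same power of $q$ reappears in the shift step, since $s_{A_k(\lambda)}(1,q^{-2},\dots,q^{-2(N-1)})=q^{-kN(N-1)}s_\lambda(1,q^{-2},\dots,q^{-2(N-1)})$, so the identity you actually need is
\[\mathcal{S}(x_1,\dots,x_N;P^{A_k(\theta)}_N)=x_1^k(q^2x_2)^k\cdots(q^{2(N-1)}x_N)^k\,\mathcal{S}(x_1,\dots,x_N;P^{\theta}_N),\]
and the two occurrences of $q^{kN(N-1)}$ must be tracked consistently for the product formula to close. As written, you drop this constant in one factor and are silent about it in the other, so the verification does not go through literally (the paper records both factors with the $q$-powers included).

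The two identifications you defer as ``routine'' are also exactly where the paper invokes external input. That the extreme quantized character of $U_q(\infty)$ with parameter $(k,k,\dots)$ restricts at level $N$ to the indecomposable character $\chi^{(k,\dots,k)}$ does not follow from Remark \ref{Rem:3.2}, which only realizes the irreducible representation labeled $(k,\dots,k)$ as $\mathrm{det}_q^k(N)$; the paper obtains the corresponding generating function from the limit formula of \cite[Theorem 5.1, Proposition 4.10]{Gorin12}. Similarly, $P_N^{A_k(\theta)}=P_N^\theta\circ A_{-k}$ is not a formal consequence of the bare fact that extreme points are parametrized by $\mathcal{N}$; the paper cites \cite[Proposition 5.13]{Gorin12}. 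Both facts are true and your route could be completed (for instance by checking directly that the delta measures at $(k,\dots,k)$ form an extreme $q$-coherent system and then identifying its parameter), but these are the substantive inputs of the argument, not bookkeeping.
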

\begin{proof}
By Proposition \ref{prop:3.2}, it suffices to show that for every $N\geq1$
\[\mathcal{S}(x_1,\dots,x_N;P^{A_k(\theta)}_N)=\mathcal{S}(x_1,\dots,x_N;P^\theta_N)\mathcal{S}(x_1,\dots,x_N;P^{(k,k,\dots)}_N),\]
where $(P^\theta_N)_{N=1}^\infty$ is the extreme $q$-coherent system corresponding to $\theta\in\mathcal{N}$. We define $A_k(\lambda):=(\lambda_1+k,\dots,\lambda_N+k)$ for any $\lambda\in\mathrm{Sign}_N$. By \cite[Proposition 5.13]{Gorin12}, we have $P_N^{A_k(\theta)}=P_N^\theta\circ A_{-k}$. Thus, we have 
\[\mathcal{S}(x_1,\dots,x_N;P^{A_k(\theta)}_N)=x_1^k(q^2x_2)^k\cdots (q^{2(N-1)}x_N)^k\mathcal{S}(x_1,\dots,x_N;P^\theta_N)\] 
since $s_{A_k(\lambda)}(x_1,\dots,x_N)=x_1^k\cdots x_N^ks_\lambda(x_1,\dots,x_N)$. By \cite[Theorem 5.1, Proposition 4.10]{Gorin12},
\begin{align*}
\mathcal{S}(x_1,\dots,x_N;P^{(k,k,\dots)}_N)
&=\lim_{L\to\infty,N\leq L}\frac{s_{(k,\dots,k)}(x_1,\dots,x_N,q^{-2N},\dots,q^{-2(L-1)})}{s_{(k,\dots,k)}(1,q^{-2}\dots,q^{-2(L-1)})}\\
&=x_1^k(q^2x_2)^k\dots(q^{2(N-1)}x_N)^k.
\end{align*}
Therefore, we obtain $\mathcal{S}(x_1,\dots,x_N;P^{A_k(\theta)}_N)=\mathcal{S}(x_1,\dots,x_N;P^\theta_N)\mathcal{S}(x_1,\dots,x_N;P^{(k)}_N)$.
\end{proof}

\begin{remark}
Recall that the set of extreme quantized characters of $U_q(\infty)$ is completely parametrized by $\mathcal{N}=\{(\theta_i)_{i=1}^\infty\in\mathbb{Z}^\infty\mid\theta_1\leq\theta_2\leq\cdots\}$. Since the GNS-representation associated with any extreme KMS state is factorial, we obtain a factor (a von Neumann algebra with trivial center) corresponding to any parameter of $\mathcal{N}$. In our previous paper \cite{Sato2}, the Murray--von Neumann--Connes type of the resulting factor is explicitly determined in terms of $\mathcal{N}$. Combining with the above corollary, we obtain the following rule on tensor products of factor representations associated with extreme quantized characters of $U_q(\infty)$:
\[\text{type X}\otimes\text{type I}_1=\text{type X},\]
where $X=$I${}_1$, I${}_\infty$ or I\hspace{-.1em}I\hspace{-.1em}I${}_{q^2}$. 
\end{remark}

The description of tensor product representations is of importance in representation theory. The determinant $\det(U)$ of $U\in U(N)\subset U(\infty)$ gives one of the simplest extreme characters, and the product of the determinant and an arbitrary extreme character also becomes an extreme character. Moreover, using Voiculescu functions (see \cite{Voiculescu76}) of extreme characters, we can explicitly describe such a multiplication of the determinant and an extreme character (i.e., the tensor product of the determinant and the finite factor representation associated with each extreme character). In the asymptotic representation theory of $U_q(\infty)$, Voiculescu functions are replaced with $q$-Schur generating functions. In fact, we have described the tensor product representations associated with any pair of quantized characters in terms of $q$-Schur generating functions in Theorem \ref{prop:3.2}. By Remark \ref{Rem:3.2}, the quantum analog of $\det{}^k$ ($k\in\mathbb{Z}$) is given as the extreme quantized character corresponding to $(k,k,\dots)\in\mathcal{N}$. Similarly to the case of $U(\infty)$, we have proved that the tensor product of extreme quantized character corresponding to $(k,k,\dots)\in\mathcal{N}$ and an arbitrary parameter must be extreme. Moreover, the resulting new parameter is given by the transformation $A_k$ on $\mathcal{N}$. Therefore, we have been able to give an explicit representation theoretic interpretation to $A_k$.

\appendix
\section{Spherical representations and spherical functions}
This appendix is based on what Yoshimichi Ueda explained to us in a rather general framework. The goal of this appendix is to develop a minimal foundation of Olshanski's spherical representation theory in the quantum setting in order to provide a basis for subsequent investigations of unitary representation theory for $U_q(\infty)$. It may be regarded as an answer to a question to the first version of this paper asked by Grigori Olshanski. In what follows, we will freely use standard facts in modular theory, see e.g., \cite[Section 2.5]{BratteliRobinson1}, \cite[Chapter IV--IX]{Takesaki:book}. Actually, the theory of standard forms based on modular theory lies behind the materials here. We also use the standard notation $(a\cdot\varphi\cdot b)(x)=\varphi(bxa)$ with a state $\varphi$ on a $C^*$-algebra $A$ and $a,b,x\in A$.

Let $G$ be a topological group. A triple $(T,\mathcal{H},\xi)$ is called a \emph{spherical representation} if $(T,\mathcal{H})$ is a unitary representation of $G\times G$ and $\xi\in\mathcal{H}$ is a cyclic $G$-invariant unit vector, where we remark that $G$ can be identified a subgroup of $G\times G$ by $g\in G\mapsto (g,g)\in G\times G$. Then the function $\varphi\colon G\times G\to\mathbb{C}$ given as $\varphi(g,h):=\langle T(g,h)\xi,\xi\rangle$ is called a \emph{spherical function}. Namely, the spherical function $\varphi$ is a positive-definite continuous function satisfying that $\varphi(e)=1$ and $\varphi$ is $G$-biinvariant, that is, $\varphi(kgk',khk')=\varphi(g,h)$ for any $g,h,k,k'\in G$. It is known that there exists an affine bijective correspondence between the characters of $G$ and the spherical functions. In this way, there also exists a correspondence between finite factor representations of $G$ and irreducible spherical representations. See \cite{Olshanski90} for more details. 


Let $G=(M,\mathfrak{A},\delta, R,\{\tau_t\}_{t\in\mathbb{R}})$ be a quantum group $W^*$-algebra (see Definition \ref{def:quantumgroup}). We remark that the presence of dense $C^*$-algebra $\mathfrak{A}$ and comultiplication $\delta$ is not necessary in what follows. We denote by $M\otimes_\mathrm{bin} M$ the binormal tensor product of $M$, see \cite[Section IV.2.3]{Blackadar}. For any map $f$ on $M\otimes_\mathrm{bin} M$ we define $f_L$ and $f_R$ on $M$ by $f_L(x):=f(x\otimes 1)$, $f_R(x):=f(1\otimes x)$. If $f_L$ and $f_R$ are normal, then $f$ is said to be binormal. Note that the binormal tensor product $M\otimes_\mathrm{bin}M$ enjoys the universality with respect to two commuting \emph{normal} $*$-representations of $M$. We denote by $M_{\tau}$ the $\sigma$-weakly dense $*$-subalgebra of $\tau$-analytic elements.

\begin{definition}
A triple $(T,\mathcal{H},\xi)$ is called a \emph{spherical representation} if $(T,\mathcal{H})$ is a binormal $*$-representation of $M\otimes_\mathrm{bin} M$ and $\xi\in\mathcal{H}$ is a cyclic unit vector satisfying that 
\begin{equation}\label{eq:inv}
T(1\otimes x)\xi=T(\kappa(x)\otimes 1)\xi
\end{equation} for any $x\in M_{\tau}$, where $\kappa:=R\circ\tau_{-\mathrm{i}/2}=\tau_{-\mathrm{i}/2}\circ R$. Then $\xi$ is called a \emph{spherical vector}. Two spherical representations $(T_i,\mathcal{H}_i,\xi_i)$ ($i=1,2$) are unitarily equivalent if there exists a unitary intertwiner $U$ from $(T_1,\mathcal{H}_1)$ to $(T_2,\mathcal{H}_2)$ satisfying that $U\xi_1=\xi_2$.
\end{definition}

Recall that $\kappa$ becomes the canonical antipode when $G$ comes from a usual quantum group like $SU_q(2)$, etc, see \cite[Remark 1.3(a)]{MasudaNakagami}. In this viewpoint, Equation \eqref{eq:inv} can be regared as an analog of $T(e,g)\xi=T(g^{-1},e)\xi$ for every $g\in G$ with a unitary representation $(T,\mathcal{H})$ of an ordinary group $G\times G$ and $\xi\in\mathcal{H}$. This is clearly equivalent to that $T(g,g)\xi=\xi$ for every $g\in G$. Therefore, our definition is a natural generalization of Olshanski's one for ordinary groups.

Let $\chi$ be a quantized character of $G$ and $(\pi_\chi,\mathcal{H}_\chi,\xi_\chi)$ the associated GNS-triple. Since $\chi$ is a $\tau$-KMS state, $\xi_\chi$ is separating and cyclic for $\pi_\chi(M)$. This guarantees that $\chi$ ``extends'' to a faithful normal state $\hat \chi$ on $\pi_\chi(M)$, i.e., $\hat\chi(\pi_\chi(x))=\chi(x)$ for any $x\in M$. See \cite[Corollary 5.3.8]{BratteliRobinson2} and around there. Moreover, the modular automorphism group $\{\sigma^{\hat\chi}_t\}_{t\in\mathbb{R}}$ associated with $\hat\chi$ satisfies that $\sigma^{\hat\chi}_t(\pi_\chi(x))=\pi_\chi(\tau_t(x))$ for every $x\in M$ and $t\in \mathbb{R}$. Hence, the modular conjugation $J_{\hat\chi}\colon\mathcal{H}_\chi\to\mathcal{H}_{\hat\chi}$ is known to be given by the formula: $J_{\hat\chi}\pi_\chi(x)\xi_\chi=\pi_\chi(\tau_{-\mathrm{i}/2}(x^*))\xi_\chi$ for every $x\in M_\tau$. Then it is well known that $J_{\hat\chi}\pi_\chi(M)J_{\hat\chi}$ coincides with the commutant $\pi_\chi(M)'$. By universality of binormal tensor product, there is a unique binormal $*$-representation $(T_\chi,\mathcal{H}_\chi)$ of $M\otimes_\mathrm{bin} M$ such that $T_\chi(x\otimes y)=\pi_\chi(x)J_{\hat\chi}(\pi_\chi(R(y)^*))J_{\hat\chi}$ for any simple tensor $x\otimes y$.

\begin{lemma}\label{lem:CW}
The following three assertions hold true:
\begin{enumerate}
\item $(T_\chi,\mathcal{H}_\chi,\xi_\chi)$ is a spherical representation.
\item $(T_\chi,\mathcal{H}_\chi)$ is irreducible if and only if $\chi$ is extreme, that is, $(\pi_\chi,\mathcal{H}_\chi)$ is factorial.
\item $(T_\chi,\mathcal{H}_\chi,\xi_\chi)$ is a unique spherical representation up to unitarily equivalence satisfying that $\langle (T_\chi)_L(x)\xi_\chi,\xi_\chi\rangle=\chi(x)$ for any $x\in M$.
\end{enumerate}
\end{lemma}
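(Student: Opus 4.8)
The plan is to take the three assertions in turn, reserving most of the effort for the uniqueness in (3). Throughout I will use the facts assembled just before the lemma: $J_{\hat\chi}\pi_\chi(M)J_{\hat\chi}=\pi_\chi(M)'$, the modular-conjugation formula $J_{\hat\chi}\pi_\chi(y)\xi_\chi=\pi_\chi(\tau_{-\mathrm{i}/2}(y^*))\xi_\chi$ for $y\in M_\tau$, and $J_{\hat\chi}\xi_\chi=\xi_\chi$.

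For (1), binormality is built into the construction: the left leg is $(T_\chi)_L=\pi_\chi$ and the right leg $(T_\chi)_R(y)=J_{\hat\chi}\pi_\chi(R(y)^*)J_{\hat\chi}$ is a routine-to-check normal $*$-homomorphism into $\pi_\chi(M)'$, so the two legs commute and the universality of $\otimes_\mathrm{bin}$ produces $T_\chi$. Cyclicity of $\xi_\chi$ is immediate, since already $(T_\chi)_L(M)\xi_\chi=\pi_\chi(M)\xi_\chi$ is dense. The one genuine computation is the spherical identity \eqref{eq:inv}: for $x\in M_\tau$,
\[
T_\chi(1\otimes x)\xi_\chi=J_{\hat\chi}\pi_\chi(R(x)^*)\xi_\chi=\pi_\chi\bigl(\tau_{-\mathrm{i}/2}(R(x))\bigr)\xi_\chi=\pi_\chi(\kappa(x))\xi_\chi=T_\chi(\kappa(x)\otimes1)\xi_\chi,
\]
where the first step uses $J_{\hat\chi}\xi_\chi=\xi_\chi$, the second the modular-conjugation formula applied to $R(x)^*\in M_\tau$ (legitimate because $R$ commutes with $\tau$ and $M_\tau$ is $*$-stable), and the third $\kappa=\tau_{-\mathrm{i}/2}\circ R$.

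Assertion (2) is a commutant computation. Since $(T_\chi)_L(M)=\pi_\chi(M)$ and $(T_\chi)_R(M)=\pi_\chi(M)'$ together generate $T_\chi(M\otimes_\mathrm{bin}M)$, the commutant of the latter is $\pi_\chi(M)'\cap\pi_\chi(M)''=Z(\pi_\chi(M))$; hence $T_\chi$ is irreducible precisely when $\pi_\chi(M)$ is a factor, which is the earlier criterion for $\chi$ to be extreme.

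The substance is in (3). That $T_\chi$ realizes $\chi$ as $\langle(T_\chi)_L(x)\xi_\chi,\xi_\chi\rangle=\chi(x)$ is immediate from GNS. For uniqueness, given a spherical representation $(T,\mathcal{H},\xi)$ with $\langle T_L(x)\xi,\xi\rangle=\chi(x)$, I would first exploit the spherical condition for $T$ to see that $\xi$ is already cyclic for the single algebra $\pi:=T_L(M)$: for $b\in M_\tau$ we have $T_R(b)\xi=\pi(\kappa(b))\xi\in\pi(M)\xi$, so $T_L(a)T_R(b)\xi=\pi(a\kappa(b))\xi$, and by $\sigma$-weak density of $M_\tau$ and normality the entire cyclic subspace $T(M\otimes_\mathrm{bin}M)\xi$ lies in $\overline{\pi(M)\xi}$. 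Thus $(\pi,\mathcal{H},\xi)$ is a GNS triple for $\chi$, and uniqueness of GNS gives a unitary $U\colon\mathcal{H}_\chi\to\mathcal{H}$ with $U\xi_\chi=\xi$ and $U\pi_\chi(x)U^*=T_L(x)$. The main obstacle is then to intertwine the right legs, which the hypotheses only pin down on the cyclic vector: I would observe that $U(T_\chi)_R(y)U^*$ and $T_R(y)$ both lie in $T_L(M)'$ and both send $\xi$ to $\pi(\kappa(y))\xi$ for $y\in M_\tau$, and that two operators in $T_L(M)'$ agreeing on a vector cyclic for $T_L(M)$ must coincide; normality then extends the equality to all $y\in M$. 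Since $T_\chi(x\otimes y)=(T_\chi)_L(x)(T_\chi)_R(y)$, this yields $UT_\chi(x\otimes y)U^*=T(x\otimes y)$, i.e.\ the required unitary equivalence of spherical representations.
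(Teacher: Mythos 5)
Your proof is correct and follows the same route as the paper: the paper likewise reduces (3) to showing that $\xi$ is cyclic for $T_L(M)$ via the identity $T(x\otimes y)\xi=T_L(x\kappa(y))\xi$ together with binormality and the $\sigma$-weak density of $M_\tau$, and it declares (1) and (2) easy and leaves them to the reader. You actually go a bit further than the paper on (3) by explicitly checking that the GNS unitary also intertwines the right legs (two operators in $T_L(M)'$ agreeing on a vector cyclic for $T_L(M)$ must coincide, then extend by normality); that step is sound and fills in a detail the paper leaves implicit.
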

\begin{proof}
The first and the second assertions are easy to prove. We leave them to the reader and prove only the third assertion. Let $(T,\mathcal{H},\xi)$ be a spherical representation satisfying $\langle T_L(x)\xi,\xi\rangle=\chi(x)$ for any $x\in M$. By the uniqueness of GNS-triple associated with $\chi$, it suffices to show that $\xi$ is cyclic for $T_L(M)$. By Equation \eqref{eq:inv}, we have $T(x\otimes y)\xi=T_L(x\kappa(y))\xi$ for any $x\in M$ and $y\in M_\tau$. Thus, $T(M\otimes_\mathrm{bin} M)\xi\subset \overline{T_L(M)\xi}^{\|\,\cdot\,\|}$ since $M_\tau$ is $\sigma$-weakly dense in $M$ and $T$ is binormal. Namely, $\xi$ is also cyclic for $T_L(M)$.
%
\end{proof}

For a spherical representation $(T,\mathcal{H},\xi)$ we define a state $\varphi:=\varphi_{(T,\mathcal{H},\xi)}$ on $M\otimes_\mathrm{bin}  M$ by $\varphi(X):=\langle T(X)\xi,\xi\rangle$ for any $X\in M\otimes_\mathrm{bin} M$, which we call the \emph{spherical function} associated with $(T,\mathcal{H},\xi)$. Recall that original spherical functions are defined as certain positive-definite continuous functions on $G\times G$ with an ordinary group $G$ in Olshanski's theory. Our naive idea here is to translate positive-definite continuous functions on groups into positive linear functionals on corresponding $C^*$-algebra.

It is fairly elementary to check that the above $\varphi$ is binormal and satisfies that
\begin{equation}\label{eq:sp}
(\kappa(x)\otimes1)\cdot\varphi=(1\otimes x)\cdot\varphi
\end{equation}
for every $x\in M_\tau$. Here is an abstract definition of spherical functions:

\begin{definition}
A binormal state $\varphi$ on $M\otimes_\mathrm{bin} M$ is called a \emph{spherical function} if $\varphi$ satisfies Equation \eqref{eq:sp}.
\end{definition}

The following lemma gives naturality of quantized characters from the viewpoint of spherical representation theory.
\begin{lemma}\label{lem:pikopiko}
For any spherical function $\varphi$, the state $\varphi_L$ on $M$ becomes a quantized character of $G$. Moreover, $(T_L,\mathcal{H},\xi)$ gives the GNS-triple associated with $\varphi_L$ if $\varphi$ is associated with a spherical representation $(T,\mathcal{H},\xi)$.
\end{lemma}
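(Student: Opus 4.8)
The plan is to treat the two assertions in turn, the first by reducing everything to an analytic form of the KMS condition and the second by the cyclicity argument already used in Lemma \ref{lem:CW}(3). Since $\varphi$ is binormal, $\varphi_L$ is automatically a normal state on $M$, so the substance of the first assertion is the $\tau$-KMS condition at inverse temperature $-1$. Unwinding \eqref{eq:sp} with the convention $(a\cdot\varphi)(Y)=\varphi(Ya)$ gives, for every $x\in M_\tau$ and every $Y\in M\otimes_{\mathrm{bin}}M$, the right-multiplication identity $\varphi\big(Y(\kappa(x)\otimes1)\big)=\varphi\big(Y(1\otimes x)\big)$. First I would produce a companion ``left-multiplication'' identity: applying hermiticity of the state, $\varphi(Z^*)=\overline{\varphi(Z)}$, to this relation (with $Y$ replaced by $Z^*$) and using the elementary identity $\kappa(x)^*=\kappa^{-1}(x^*)$---which follows because $R$ is a $*$-anti-automorphism and $\tau_{-\mathrm{i}/2}(x)^*=\tau_{\mathrm{i}/2}(x^*)$ for analytic $x$---yields $\varphi\big((\kappa^{-1}(w)\otimes1)Z\big)=\varphi\big((1\otimes w)Z\big)$ for all $w\in M_\tau$ and all $Z$.

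Next I would combine the two identities. Specializing $Y=y\otimes1$ in the first and $Z=y\otimes1$ in the second gives $\varphi_L\big(y\kappa(x)\big)=\varphi(y\otimes x)$ and $\varphi_L\big(\kappa^{-1}(w)y\big)=\varphi(y\otimes w)$; taking $w=x$ and comparing produces $\varphi_L\big(\kappa^{-1}(x)y\big)=\varphi_L\big(y\kappa(x)\big)$ for all $x\in M_\tau$ and $y\in M$. Substituting $x=\kappa(z)$ (note that $\kappa$ preserves $M_\tau$) and using $\kappa^2=\tau_{-\mathrm{i}}$---which follows from $R^2=\mathrm{id}$ and $R\tau_t=\tau_t R$---I obtain $\varphi_L(zy)=\varphi_L\big(y\tau_{-\mathrm{i}}(z)\big)$ for every $z\in M_\tau$ and $y\in M$. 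Because $M_\tau$ is a $\sigma$-weakly dense, $\tau$-invariant $*$-subalgebra of $\tau$-analytic elements, this is exactly the analytic formulation of the $\tau$-KMS condition at inverse temperature $-1$, and the standard characterization of KMS states then upgrades it to the full KMS condition; hence $\varphi_L\in\mathrm{Char}(G)$.

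For the second assertion, suppose $\varphi(X)=\langle T(X)\xi,\xi\rangle$ for a spherical representation $(T,\mathcal{H},\xi)$. Then $\langle T_L(x)\xi,\xi\rangle=\varphi(x\otimes1)=\varphi_L(x)$, and $T_L$ is a normal $*$-representation of $M$ because $T$ is binormal; so it only remains to check that $\xi$ is cyclic for $T_L(M)$. Repeating the computation of Lemma \ref{lem:CW}(3), the invariance \eqref{eq:inv} gives $T(x\otimes y)\xi=T(x\kappa(y)\otimes1)\xi=T_L\big(x\kappa(y)\big)\xi\in T_L(M)\xi$ for $x\in M$ and $y\in M_\tau$, and the $\sigma$-weak density of $M_\tau$ together with the binormality of $T$ extends this to all simple tensors $x\otimes y$. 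Since $\xi$ is cyclic for $T$, it follows that $\overline{T_L(M)\xi}=\mathcal{H}$, so $(T_L,\mathcal{H},\xi)$ is the GNS triple of $\varphi_L$.

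I expect the main obstacle to be the first step---manufacturing the left-multiplication companion to \eqref{eq:sp} and tracking the antipode correctly. The point is that \eqref{eq:sp} only encodes a right-multiplication relation through $\kappa$, whereas the KMS condition is two-sided; it is precisely the mismatch between $\kappa$ (on one side) and $\kappa^{-1}$ (on the other) that, after the substitution $x=\kappa(z)$, materializes as the $\tau_{-\mathrm{i}}$-twist defining the inverse-temperature-$(-1)$ KMS condition. Verifying the algebraic identities $\kappa(x)^*=\kappa^{-1}(x^*)$ and $\kappa^2=\tau_{-\mathrm{i}}$, and confirming that $\kappa^{\pm1}$ preserve $M_\tau$, are the places where care is needed; the final passage from the analytic identity on $M_\tau$ to the genuine KMS condition is routine.
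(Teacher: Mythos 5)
Your proposal is correct and follows essentially the same route as the paper: both derive the analytic KMS identity $\varphi_L(y\tau_{-\mathrm{i}}(z))=\varphi_L(zy)$ on $M_\tau$ by applying Equation \eqref{eq:sp} twice---once directly and once through hermiticity of $\varphi$---together with $\kappa(x)^*=\kappa^{-1}(x^*)$ and $\kappa^2=\tau_{-\mathrm{i}}$, and both settle the GNS claim by reusing the cyclicity computation from Lemma \ref{lem:CW}(3). Your reorganization into a pair of one-sided multiplication identities, and your explicit remark that the identity on the dense $*$-subalgebra $M_\tau$ of analytic elements must then be upgraded to the full KMS condition, only make explicit what the paper leaves implicit.
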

\begin{proof}
Since $\varphi_L$ is normal, it suffices to show that $\varphi_L$ is a $\tau$-KMS state. Since $\tau_{-i}=\kappa^2$ and $\tau_{\mathrm{i}/2}(y)^*=\tau_{-\mathrm{i}/2}(y^*)$, we have
\begin{align*}
\varphi_L(x\tau_{-\mathrm{i}}(y))
&=[(\kappa^2(y)\otimes1)\cdot\varphi](x\otimes1)\\
&=[(1\otimes\kappa(y))\cdot\varphi](x\otimes1)\\
&=\overline{[(1\otimes\kappa^{-1}(y^*))\cdot\varphi](x^*\otimes1)}\\
&=\overline{[(y^*\otimes1)\cdot\varphi](x^*\otimes1)}\\
&=\varphi_L(yx)
\end{align*}
for any $x\in M$ and $y\in M_{\tau}$. If $\varphi$ is associated with a spherical representation $(T,\mathcal{H},\xi)$, then it is clear that $\varphi_L(x)=\langle T_L(x)\xi,\xi\rangle$ for any $x\in M$. Moreover, $\xi$ is cyclic for $T_L(M)$ (see the proof of Lemma \ref{lem:CW}). Thus, $(T_L,\mathcal{H},\xi)$ is the GNS-triple associated with $\varphi$.
\end{proof}

\begin{remark}
In a similar way to Lemma \ref{lem:pikopiko}, we can prove that $\varphi_R(x\kappa^{-2}(y))=\varphi_R(yx)$ for any $x\in M$ and $y\in M_\tau$. Since $\kappa^{-2}=\tau_{i}$, we conclude that $\varphi_R$ is $\tau$-KMS state on $M$ with the inverse temperature $1$.
\end{remark}

We are in a position to give a precise relationship between spherical functions and quantized characters. The next theorem can be understood as a quantum analog of one of the key facts in Olshanski's spherical representation theory (see [13, Section 24]). 

\begin{theorem}\label{thm:ul}
The following four assertions hold true:
\begin{enumerate}
\item The correspondence $(T,\mathcal{H},\xi)\mapsto \varphi_{(T,\mathcal{H},\xi)}$ gives a bijection from the unitarily equivalent classes of spherical representations to the spherical functions.
\item The correspondence $\chi\mapsto (T_\chi,\mathcal{H}_\chi,\xi_\chi)$ gives a bijection from the spherical functions to the unitarily equivalent classes of spherical representations.
\item By (1), (2), the correspondence $\chi\mapsto\varphi_{(T_\chi,\mathcal{H}_\chi,\xi_\chi)}$ gives a bijection from the spherical functions to the quantized characters. Moreover, the inverse correspondence is given as $\varphi\mapsto\varphi_L$.
\item Under the bijection in (2) (resp. in (1)), extreme quantized characters (resp. extreme spherical functions) correspond to irreducible spherical representations.
\end{enumerate}
\end{theorem}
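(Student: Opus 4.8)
The plan is to treat the assertions (1)--(3) uniformly by introducing the three natural maps between the objects in play and showing that they compose to the identity in all three cyclic orders. Write $\mathcal{R}$ for the set of unitary equivalence classes of spherical representations and $\mathcal{S}$ for the set of spherical functions, and define
\[
\Psi\colon\mathcal{R}\to\mathcal{S},\quad [(T,\mathcal{H},\xi)]\mapsto\varphi_{(T,\mathcal{H},\xi)};\qquad
\Xi\colon\mathrm{Char}(G)\to\mathcal{R},\quad \chi\mapsto[(T_\chi,\mathcal{H}_\chi,\xi_\chi)];\qquad
\Lambda\colon\mathcal{S}\to\mathrm{Char}(G),\quad \varphi\mapsto\varphi_L.
\]
These are well defined: $\Psi$ lands in $\mathcal{S}$ because $\varphi_{(T,\mathcal{H},\xi)}$ is binormal and satisfies \eqref{eq:sp}, and it descends to equivalence classes since a unitary intertwiner carrying spherical vector to spherical vector preserves the associated vector state; $\Xi$ lands in $\mathcal{R}$ by Lemma \ref{lem:CW}(1) together with the uniqueness of the GNS triple; and $\Lambda$ lands in $\mathrm{Char}(G)$ by Lemma \ref{lem:pikopiko}. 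I then prove the three round-trip identities
\[
\Lambda\circ\Psi\circ\Xi=\mathrm{id}_{\mathrm{Char}(G)},\qquad \Xi\circ\Lambda\circ\Psi=\mathrm{id}_{\mathcal{R}},\qquad \Psi\circ\Xi\circ\Lambda=\mathrm{id}_{\mathcal{S}}.
\]
Each identity exhibits one of the maps as a one-sided inverse of another, and the three together force $\Psi,\Xi,\Lambda$ to be bijections with $\Psi^{-1}=\Xi\Lambda$, $\Xi^{-1}=\Lambda\Psi$ and $\Lambda^{-1}=\Psi\Xi$; this is precisely (1) and (2), while (3) is the statement that $\Psi\circ\Xi$ is a bijection whose inverse is $\Lambda$, i.e. $\varphi\mapsto\varphi_L$.

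The first two identities are repackagings of the lemmas. For $\Lambda\Psi\Xi=\mathrm{id}$, Lemma \ref{lem:pikopiko} gives $(\varphi_{(T_\chi,\dots)})_L(x)=\langle(T_\chi)_L(x)\xi_\chi,\xi_\chi\rangle$, which equals $\chi(x)$ by the normalization in Lemma \ref{lem:CW}(3). For $\Xi\Lambda\Psi=\mathrm{id}$, given $(T,\mathcal{H},\xi)$ I set $\chi':=(\Lambda\Psi)[(T,\mathcal{H},\xi)]$, which by Lemma \ref{lem:pikopiko} is the quantized character $x\mapsto\langle T_L(x)\xi,\xi\rangle$ with GNS triple $(T_L,\mathcal{H},\xi)$; since $(T,\mathcal{H},\xi)$ then satisfies the normalizing condition of the uniqueness clause in Lemma \ref{lem:CW}(3), it is unitarily equivalent to $(T_{\chi'},\mathcal{H}_{\chi'},\xi_{\chi'})=\Xi(\chi')$.

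The substantive point, which I expect to be the main obstacle, is the third identity $\Psi\Xi\Lambda=\mathrm{id}_{\mathcal{S}}$, asserting that every spherical function is recovered from its character, $\varphi_{(T_{\varphi_L},\dots)}=\varphi$. I would compute the vector state of $T_{\varphi_L}$ on a simple tensor $x\otimes y$ with $x\in M$ and $y\in M_\tau$. Using $J_{\hat\chi}\xi_\chi=\xi_\chi$, the modular formula $J_{\hat\chi}\pi_\chi(z)\xi_\chi=\pi_\chi(\tau_{-\mathrm{i}/2}(z^*))\xi_\chi$ with $z=R(y)^*$, and $\tau_{-\mathrm{i}/2}\circ R=\kappa$, the definition $T_\chi(x\otimes y)=\pi_\chi(x)J_{\hat\chi}\pi_\chi(R(y)^*)J_{\hat\chi}$ collapses to $\langle T_{\varphi_L}(x\otimes y)\xi,\xi\rangle=\varphi_L(x\kappa(y))$. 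On the other hand, inserting $X=x\otimes 1$ and $y\in M_\tau$ into the spherical relation \eqref{eq:sp} gives $\varphi(x\otimes y)=\varphi(x\kappa(y)\otimes 1)=\varphi_L(x\kappa(y))$, so the two binormal states agree on all such simple tensors. To upgrade this to equality on $M\otimes_\mathrm{bin}M$ I would use that a binormal positive functional has normal slices: for $x\geq0$ the map $y\mapsto\varphi(x\otimes y)$ is positive and dominated by $\|x\|\,\varphi_R$, hence normal, and likewise for the other state and the other leg. Thus, for fixed $x$, agreement on the $\sigma$-weakly dense set $M_\tau$ forces agreement for all $y\in M$, and symmetrically in $x$; since the algebraic tensor product is norm dense in $M\otimes_\mathrm{bin}M$, the two states coincide.

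Finally, for (4) I would invoke Lemma \ref{lem:CW}(2): under $\Xi$ an extreme (equivalently, factorial) quantized character $\chi$ corresponds to an irreducible $(T_\chi,\mathcal{H}_\chi)$, which settles the quantized-character side. Since $\Lambda\colon\varphi\mapsto\varphi_L$ is an affine bijection of convex sets, it carries extreme spherical functions to extreme quantized characters and back; composing with $\Xi$ (that is, using $\Psi^{-1}=\Xi\Lambda$) then matches extreme spherical functions with irreducible spherical representations as well.
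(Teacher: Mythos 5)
Your proposal is correct and follows essentially the same route as the paper: your three round-trip identities $\Lambda\Psi\Xi=\mathrm{id}$, $\Xi\Lambda\Psi=\mathrm{id}$, $\Psi\Xi\Lambda=\mathrm{id}$ are exactly the paper's claims (c), (a), (b), proved from Lemmas \ref{lem:CW} and \ref{lem:pikopiko} in the same way, with the key computation $\langle T_\chi(x\otimes y)\xi_\chi,\xi_\chi\rangle=\chi(x\kappa(y))=\varphi(x\otimes y)$ plus binormality/density closing the loop. Your domination argument for normality of the slices $y\mapsto\varphi(x\otimes y)$ is a welcome elaboration of a step the paper leaves implicit.
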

\begin{proof}
To prove (1), (2) and (3), it suffices to show the following three claims:
\begin{description}
\item[(a)] Any spherical representation $(T,\mathcal{H},\xi)$ is unitarily equivalent to $(T_\chi,\mathcal{H}_\chi,\xi_\chi)$, where $\chi:=\varphi_L$ and $\varphi:=\varphi_{(T,\mathcal{H},\xi)}$.
\item[(b)] Any spherical function $\varphi$ is equal to $\varphi_{(T,\mathcal{H},\xi)}$, where $(T,\mathcal{H},\xi):=(T_\chi,\mathcal{H}_{\chi},\xi_{\chi})$ and $\chi:=\varphi_L$.
\item[(c)] Any quantized character $\chi$ of $G$ is equal to $\varphi_L$, where $\varphi:=\varphi_{(T,\mathcal{H},\xi)}$ and $(T,\mathcal{H},\xi):=(T_\chi,\mathcal{H}_\chi,\xi_\chi)$.
\end{description}
Claims (a) and (c) clearly follow from Lemma \ref{lem:CW}(1), (3) and Lemma \ref{lem:pikopiko}. We prove Claim (b). Let $\varphi$ be a spherical function. By Lemma \ref{lem:pikopiko} and Lemma \ref{lem:CW}(1), we obtain a quantized character $\chi:=\varphi_L$ of $G$ and the spherical representation $(T_\chi,\mathcal{H}_\chi,\xi_\chi)$. Then we have 
$\langle T_\chi(x\otimes y)\xi_\chi,\xi_\chi\rangle=\chi(x\kappa(y))=\varphi(x\kappa(y)\otimes1)=\varphi(x\otimes y)$ for any $x\in M$ and $y\in M_\tau$.
Since $M_\tau$ is $\sigma$-weakly dense in $M$ and $T_\chi$ and $\varphi$ are binormal, we obtain $\varphi_{(T_\chi,\mathcal{H}_\chi,\xi_\chi)}=\varphi$.

The forth assertion follows from Lemma \ref{lem:CW}(2).
\end{proof}

Next, we will give two propositions, which can be regarded as quantum analogs of other key facts in Olshanski's spherical representation theory. For a $*$-representation $(T,\mathcal{H})$ of $M\otimes_\mathrm{bin} M$ we define
\[\mathcal{H}^0:=\{\eta\in\mathcal{H}\mid T(1\otimes x)\eta=T(\kappa(x)\otimes 1)\eta\text{ for any }x\in M_{\tau}\}.\]
The first one is rather easy to prove. Hence we leave it to the reader.

\begin{proposition}
Let $(T,\mathcal{H},\xi)$ be a spherical representation. Then $(T,\mathcal{H})$ is irreducible if $\dim\mathcal{H}^0=1$.
\end{proposition}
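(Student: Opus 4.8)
The plan is to reduce the problem to the canonical spherical representation and then to read off $\dim\mathcal{H}^0$ directly from the center of the GNS von Neumann algebra, so that the hypothesis $\dim\mathcal{H}^0=1$ becomes factoriality.

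First I would invoke Theorem \ref{thm:ul} (specifically Claim (a) in its proof) together with Lemma \ref{lem:CW}(3): setting $\varphi:=\varphi_{(T,\mathcal{H},\xi)}$ and $\chi:=\varphi_L$, the given spherical representation is unitarily equivalent to the canonical one $(T_\chi,\mathcal{H}_\chi,\xi_\chi)$. Since both irreducibility and the dimension of the space $\mathcal{H}^0$ of spherical vectors are unitary invariants (a unitary intertwiner fixing the spherical vectors carries $\mathcal{H}^0$ bijectively onto $\mathcal{H}^0$), there is no loss in assuming $(T,\mathcal{H},\xi)=(T_\chi,\mathcal{H}_\chi,\xi_\chi)$. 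Recall that $T_\chi(x\otimes y)=\pi_\chi(x)J_{\hat\chi}\pi_\chi(R(y)^*)J_{\hat\chi}$, so that $T_\chi(M\otimes_\mathrm{bin}M)''$ is generated by $\pi_\chi(M)$ and by $J_{\hat\chi}\pi_\chi(M)J_{\hat\chi}=\pi_\chi(M)'$; hence its commutant is $\pi_\chi(M)'\cap\pi_\chi(M)=Z(\pi_\chi(M))$, the center. By Lemma \ref{lem:CW}(2), irreducibility of $T_\chi$ is thus equivalent to factoriality of $\pi_\chi(M)$.

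I would then argue by contraposition, showing that a nontrivial center forces $\dim\mathcal{H}^0\geq 2$. The key step is the inclusion $Z(\pi_\chi(M))\xi_\chi\subseteq\mathcal{H}^0$. Indeed, fix a central element $z\in Z(\pi_\chi(M))=\pi_\chi(M)\cap\pi_\chi(M)'$, so that $z$ commutes with $T_L(M)=\pi_\chi(M)$ (because $z\in\pi_\chi(M)'$) and with $T_R(M)\subseteq\pi_\chi(M)'$ (because $z\in\pi_\chi(M)=(\pi_\chi(M)')'$). For any $x\in M_\tau$ one then gets $T_\chi(1\otimes x)(z\xi_\chi)=z\,T_\chi(1\otimes x)\xi_\chi$ and $T_\chi(\kappa(x)\otimes 1)(z\xi_\chi)=z\,T_\chi(\kappa(x)\otimes 1)\xi_\chi$ simply by pulling $z$ through the two operators; since $\xi_\chi\in\mathcal{H}^0$ by definition of a spherical vector, the two right-hand sides coincide, whence $z\xi_\chi\in\mathcal{H}^0$.

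Finally I would note that the linear map $z\mapsto z\xi_\chi$ from $Z(\pi_\chi(M))$ into $\mathcal{H}^0$ is injective, because $\xi_\chi$ is cyclic and separating for $\pi_\chi(M)$ (the state $\chi$ being $\tau$-KMS), hence separating for the subalgebra $Z(\pi_\chi(M))$. Therefore $\dim_\mathbb{C}\mathcal{H}^0\geq\dim_\mathbb{C}Z(\pi_\chi(M))$, and if $\pi_\chi(M)$ fails to be a factor the right-hand side is at least $2$. Consequently $\dim\mathcal{H}^0=1$ forces $Z(\pi_\chi(M))=\mathbb{C}1$, i.e.\ $\pi_\chi(M)$ is a factor, and then $T_\chi$, hence $T$, is irreducible. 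The only point requiring genuine care is the inclusion $Z(\pi_\chi(M))\xi_\chi\subseteq\mathcal{H}^0$, where one must keep straight that a central element commutes simultaneously with both legs of $T_\chi$; the passage to the canonical form and the injectivity argument are routine modular-theory bookkeeping.
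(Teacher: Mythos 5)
Your proof is correct, but note that the paper does not actually supply an argument for this proposition: it is explicitly left to the reader as ``rather easy to prove,'' and the intended argument is surely the direct one. Namely, arguing contrapositively, if $(T,\mathcal{H})$ is reducible one picks a projection $P\in T(M\otimes_{\mathrm{bin}}M)'$ with $P\neq 0,1$; since $P$ commutes with every $T(1\otimes x)$ and $T(\kappa(x)\otimes 1)$ and $\xi\in\mathcal{H}^0$, both $P\xi$ and $(1-P)\xi$ lie in $\mathcal{H}^0$, they are orthogonal, and each is nonzero because $\xi$ is cyclic (e.g.\ $P\xi=0$ would give $PT(X)\xi=T(X)P\xi=0$ for all $X$, hence $P=0$); so $\dim\mathcal{H}^0\geq 2$. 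Your route reaches the same conclusion but through considerably more machinery: you first invoke Theorem \ref{thm:ul} (claim (a)) and Lemma \ref{lem:CW} to replace $(T,\mathcal{H},\xi)$ by the canonical model $(T_\chi,\mathcal{H}_\chi,\xi_\chi)$, identify the commutant of $T_\chi$ with $Z(\pi_\chi(M))$, and then show that $z\mapsto z\xi_\chi$ embeds $Z(\pi_\chi(M))$ into $\mathcal{H}^0$ (using that $\xi_\chi$ is separating, which holds because $\chi$ is a KMS state). Every step you take is sound --- the inclusions $T_L(M)=\pi_\chi(M)$ and $T_R(M)\subseteq J_{\hat\chi}\pi_\chi(M)J_{\hat\chi}=\pi_\chi(M)'$ do make central elements commute with both legs, and $\dim\mathcal{H}^0$ is indeed a unitary invariant --- and as a bonus your argument yields the sharper inequality $\dim\mathcal{H}^0\geq\dim Z(\pi_\chi(M))$, which dovetails with the companion proposition in the appendix. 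The trade-off is that you lean on the full classification theorem where an elementary projection argument, valid for an arbitrary spherical representation without any reduction to the GNS model, already suffices.
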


The usual proof of the following fact in the original setting uses the notion of Gelfand pairs crucially, but our proof below uses the Connes Radon--Nikodym theorem instead. It seems an interesting question to formulate a quantum analog of Gelfand pairs (in this context).
\begin{proposition}
Let $(T,\mathcal{H})$ be a $*$-representation of $M\otimes_\mathrm{bin} M$ such that $T_L$ and $T_R$ are normal. Then $\dim\mathcal{H}^0\leq1$ if $(T,\mathcal{H})$ is irreducible.
\end{proposition}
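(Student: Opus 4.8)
The plan is to show that, once $\mathcal{H}^0\neq\{0\}$, irreducibility forces $N:=T_L(M)''$ to be a factor, and then to invoke the Connes Radon--Nikodym theorem to see that every nonzero spherical vector produces the same quantized character, hence the same spherical function, hence --- by irreducibility and Theorem \ref{thm:ul} --- the same vector up to a scalar. (If $\mathcal{H}^0=\{0\}$ there is nothing to prove.)

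First I would fix a nonzero $\eta\in\mathcal{H}^0$ and record that $\eta$ is cyclic and separating for $N$. Cyclicity follows exactly as in the proof of Lemma \ref{lem:CW}: for $y\in M_\tau$ Equation \eqref{eq:inv} gives $T(x\otimes y)\eta=T_L(x\kappa(y))\eta$, so $T(M\otimes_\mathrm{bin} M)\eta\subseteq\overline{T_L(M)\eta}$, and since $(T,\mathcal{H})$ is irreducible $\eta$ is cyclic for $T(M\otimes_\mathrm{bin} M)$, whence for $T_L(M)$. By Lemma \ref{lem:pikopiko} the vector state $\omega_\eta:=\langle\,\cdot\,\eta,\eta\rangle$ restricts on $N$ to a faithful normal KMS state whose modular automorphism group is determined by $\sigma^{\omega_\eta}_t(T_L(x))=T_L(\tau_t(x))$; in particular $\eta$ is separating for $N$, and by $\sigma$-weak density of $T_L(M)$ this modular group does not depend on $\eta$. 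Thus $(N,\mathcal{H},\eta)$ is in standard form.

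The decisive step is to prove that $N$ is a factor. Let $J_\eta$ be the modular conjugation of $(N,\mathcal{H},\eta)$; using $J_\eta\eta=\eta$, the formula $J_\eta T_L(y)\eta=T_L(\tau_{-\mathrm{i}/2}(y^*))\eta$ for $y\in M_\tau$, the relation $\tau_{-\mathrm{i}/2}\circ R=R\circ\tau_{-\mathrm{i}/2}$, and Equation \eqref{eq:inv}, I would check that $T_R(x)\eta=J_\eta T_L(R(x)^*)J_\eta\,\eta$ for every $x\in M_\tau$. Both sides lie in $N'$ and $\eta$ is separating for $N'$, so $T_R(x)=J_\eta T_L(R(x)^*)J_\eta$, first for $x\in M_\tau$ and then, by $\sigma$-weak density and normality, for all $x\in M$. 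Consequently $T_R(M)''=J_\eta N J_\eta=N'$ by Tomita--Takesaki, and irreducibility, which reads $(T_L(M)\cup T_R(M))'=\mathbb{C}$, becomes $N\cap N'=Z(N)=\mathbb{C}$. I expect this identification $T_R(M)''=N'$ to be the main obstacle, since it is exactly what converts the a priori weak irreducibility hypothesis into genuine factoriality of $N$.

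Finally I would compare two nonzero unit vectors $\eta_1,\eta_2\in\mathcal{H}^0$. Their states $\omega_1,\omega_2$ on the factor $N$ are faithful, normal, and share the modular automorphism group above, so the Connes cocycle $(D\omega_2\!:\!D\omega_1)_t$ lies in $Z(N)=\mathbb{C}$; being a scalar one-parameter group it has the form $h^{\mathrm{i}t}$ with $h>0$ a positive scalar, and normalization of states forces $h=1$, i.e. $\omega_1=\omega_2$ on $N$, equivalently $\chi_{\eta_1}=\chi_{\eta_2}$. Because $\varphi_\eta(x\otimes y)=\chi_\eta(x\kappa(y))$ for $y\in M_\tau$ and $\varphi_\eta$ is binormal, the spherical functions coincide, $\varphi_{\eta_1}=\varphi_{\eta_2}$. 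By the injectivity in Theorem \ref{thm:ul}(1) the spherical representations $(T,\mathcal{H},\eta_1)$ and $(T,\mathcal{H},\eta_2)$ are unitarily equivalent through an intertwiner of $(T,\mathcal{H})$; irreducibility makes this intertwiner a scalar, so $\eta_2\in\mathbb{C}\eta_1$ and $\dim\mathcal{H}^0\leq1$.
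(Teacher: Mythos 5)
Your proof is correct and follows the same overall strategy as the paper's: both arguments reduce to showing that any two unit vectors in $\mathcal{H}^0$ induce faithful normal states on the factor $T_L(M)$ sharing the modular automorphism group $\sigma_t(T_L(x))=T_L(\tau_t(x))$, so that the Connes Radon--Nikodym cocycle is central, hence a scalar one-parameter group, forcing the two quantized characters to coincide; unitary equivalence of the two spherical representations plus irreducibility then makes the vectors proportional. The one place you genuinely diverge is in establishing factoriality of $N=T_L(M)$. The paper gets this by citation: since $(T,\mathcal{H},\xi_i)$ is a spherical representation realizing $\chi_i$, Lemma \ref{lem:CW}(3) identifies it with $(T_{\chi_i},\mathcal{H}_{\chi_i},\xi_{\chi_i})$, and Lemma \ref{lem:CW}(2) converts irreducibility of $(T,\mathcal{H})$ into factoriality of $T_L$. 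You instead prove directly, via the modular conjugation $J_\eta$ and Equation \eqref{eq:inv}, that $T_R(M)''=J_\eta N J_\eta=N'$, so that the commutant of $T(M\otimes_\mathrm{bin} M)$ equals $N\cap N'=Z(N)$ and irreducibility reads $Z(N)=\mathbb{C}$. This is essentially an inline proof of the relevant half of Lemma \ref{lem:CW}(2), which the paper left to the reader; it is self-contained and makes the role of Tomita--Takesaki explicit, at the cost of a few lines the paper avoids. Your key identity checks out: $J_\eta T_L(R(x)^*)\eta=T_L(\tau_{-\mathrm{i}/2}(R(x)))\eta=T_L(\kappa(x))\eta=T_R(x)\eta$ for $x\in M_\tau$, and since both operators lie in $N'$ and $\eta$ separates $N'$, the identity of vectors upgrades to an identity of operators. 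So the argument is complete and, modulo this substitution, matches the paper's.
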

\begin{proof}
Assume that $(T,\mathcal{H})$ is irreducible and $\mathcal{H}^0\neq\{0\}$. Let $\xi_i \in\mathcal{H}^0 (i=1,2)$ be unit vectors. It suffices to show that $\xi_2$ is proportional to $\xi_1$. Since $(T,\mathcal{H})$ is irreducible, $\xi_i$ is cyclic for $T(M\otimes_\mathrm{bin} M)$. Thus $(T,\mathcal{H},\xi_i)$ is a spherical representation, and hence, by Lemma \ref{lem:pikopiko}, we obtain a quantized character $\chi_i$ of $G$ and its GNS-triple $(T_L,\mathcal{H},\xi_i)$. By Lemma \ref{lem:CW}(2), (3), the representation $(T_L,\mathcal{H})$ is factorial.

Recall that the state $\hat\chi_i$ on $T_L(M)$ given by $\hat\chi_i(T_L(x))=\chi_i(x)$ for any $x\in M$ is faithful normal and its modular automorphism group $\{\sigma^{\hat\chi_i}_t\}_{t\in\mathbb{R}}$ is given by $\sigma^{\hat\chi_i}_t(T_L(x))=T_L(\tau_t(x))$ for any $x\in M$ and $t\in\mathbb{R}$. Namely, $\sigma^{\hat\chi_1}_t=\sigma^{\hat\chi_2}_t$ for any $t\in\mathbb{R}$. Then, by \cite[Theorem VI\hspace{-.1em}I\hspace{-.1em}I.3.3(d)]{Takesaki:book}, the Connes Radon--Nikodym cocycle $\{(D\hat\chi_1:D\hat\chi_2)_t\}_{t\in\mathbb{R}}$ of $\hat\chi_1$ with respect to $\hat\chi_2$ must fall into the center of $T_L(M)$. Since $(T_L,\mathcal{H})$ is a factor representation, the center of $T_L(M)$ must be trivial, and thus $\{(D\hat\chi_1:D\hat\chi_2)_t\}_{t\in\mathbb{R}}$ is a one-parameter group of scalar unitary operators, i.e., $(D\omega_1:D\omega_2)_t=\lambda^{\mathrm{i}t}$ for some $\lambda>0$. Using the well-known uniqueness result for Connes Radon--Nikodym cocycle, we can prove that $\hat\chi_1=\lambda\hat\chi_2$. Evaluating this equation at 1 we obtain that $\lambda=1$, that is, $\hat\chi_1=\hat\chi_2$ and hence $\chi_1=\chi_2$. By Theorem \ref{thm:ul}, $(T,\mathcal{H},\xi_1)$ and $(T,\mathcal{H},\xi_2)$ are unitarily equivalent. Since $(T,\mathcal{H})$ is irreducible, any unitary intertwiner from $(T,\mathcal{H},\xi_1)$ to $(T,\mathcal{H},\xi_2)$ must be a scalar operator, that is, $\xi_2$ is proportional to $\xi_1$.

\end{proof}

\section*{Acknowledgment}
The author gratefully acknowledges the passionate guidance and continuous encouragement from his supervisor, Professor Yoshimichi Ueda. The author also thanks Professor Yuki Arano and Professor Grigori Olshanski for useful comments and discussions. The author seriously considered spherical representation theory thanks to Professor Olshanski's comments for the first version of this paper. This work was supported by JSPS Research Fellowship for Young Scientists (KAKENHI Grant Number JP 19J21098).
}

\end{document}